\documentclass[preprint,sort&compress,final,12pt]{elsarticle}
\usepackage{amsthm}
\usepackage{amsmath}
\usepackage{amssymb}
\usepackage{graphicx}
\usepackage{latexsym}
\usepackage{epstopdf}
\usepackage{natbib}
\usepackage{color}
\usepackage{hyperref}
\usepackage{geometry}
\usepackage{bm}
\usepackage{mathrsfs}
\usepackage{verbatim}
\usepackage{pgfplots}

\newtheorem{theorem}{Theorem}[section]

\newtheorem{definition}[theorem]{Definition}
\newtheorem{lemma}[theorem]{Lemma}
\newtheorem{proposition}[theorem]{Proposition}

\numberwithin{equation}{section}

\definecolor{darkgreen}{RGB}{0,100,0}

\journal{\empty}
\date{}

\begin{document}
\begin{frontmatter}
\title{Propagation dynamics of an acid-mediated invasion model with degenerate tumor diffusion}

\author[au1]{Xinyue Cao}
\ead[au1]{xinyue.cao@mail.bnu.edu.cn}
\address[au1]{School of Mathematical Sciences, Beijing Normal University and Laboratory of Mathematics and Complex Systems, Ministry of Education, Beijing 100875, China}

\author[au2]{Quentin Griette}
\ead[au2]{quentin.griette@univ-lehavre.fr}
\address[au2]{Universit\'{e} Le Havre Normandie, Normandie Univ., LMAH UR 3821, 76600 Le Havre, France.}

\author[au1]{Xiong Li}
\ead[au1]{xli@bnu.edu.cn}

\author[au3,au4]{Yang Wang}
\ead[au3]{ywang2005@sxu.edu.cn}
\address[au3]{School of Mathematics and Statistics, Shanxi University, Taiyuan, Shanxi 030006, China}
\address[au4]{Key Laboratory of Complex Systems and Data Science of Ministry of Education, Shanxi University, Taiyuan, Shanxi 030006, China}

\begin{abstract}
We investigate traveling wave fronts for an acid-mediated tumor invasion model with density-dependent degenerate diffusion. The model is a partially diffusive PDE--ODE system of Gatenby--Gawlinski type, in which the tumor diffusion coefficient $D(U)$ is allowed to be a general decreasing function satisfying $D(1)=0$. This degeneracy causes the traveling wave equation for the tumor component to lose uniform ellipticity near the healthy state, and hence standard arguments for nondegenerate reaction diffusion systems are not directly applicable. To overcome this difficulty, we introduce a nonlinear change of variables which removes the degeneracy from the highest-order term of the tumor equation.  For each fixed admissible tumor profile, the acid profile is represented by a Green kernel, while the healthy-tissue profile is obtained from an explicit integral formula. The transformed tumor profile is then constructed as the stationary limit of a uniformly parabolic auxiliary problem. By combining comparison principles, local Schauder estimates, carefully chosen super- and sub-solutions, and the Schauder fixed-point theorem, we prove the existence of traveling wave fronts for every wave speed
$\theta\ge 2\sqrt{rD(0)}$. The resulting wave connects the tumor-dominant state $(0,1,1)$ at $z=-\infty$ to the healthy state $(1,0,0)$ at $z=+\infty$. We further establish strict pointwise bounds, monotonicity of all wave components, and one-sided exponential asymptotic estimates in both the transformed variable and the original traveling-wave variable.

\vspace{0.2cm}
\noindent{\bf MSC:} 35K57; 35C07; 35B40; 92C17.

\vspace{0.2cm}
\noindent{\bf Keywords:} Acid-mediated invasion; Degenerate diffusion; Traveling wave solution; Auxiliary parabolic problem; Schauder fixed-point theorem.
\end{abstract}
\end{frontmatter}

\newpage
\section{Introduction}
In 1996, to explain the fundamental driving factors of the Warburg effect, Gatenby and Gawlinski \cite{1996} proposed the acid-mediated invasion hypothesis and derived the following set of equations, known as the Gatenby-Gawlinski model
\begin{equation}\label{eq:acid model}
\begin{cases}
u_t=u(1-u)-d_1uw,\\[0.2cm]
v_t=D((1-u)v_x)_x+rv(1-v),\\[0.2cm]
w_t=\Delta w+c(v-w),
\end{cases}
\end{equation}
where $u$, $v$, $w$ represent the density functions of healthy cells, tumor cells and lactic acid in tissue microenvironment respectively. Moreover, $D, d_1, r, c$ are positive non-dimensional parameters  : $D$ denotes the diffusion rate of tumor cells, $d_1$ denotes the death rate of healthy cells caused by lactic acid, $r$ denotes the intrinsic growth rate of tumor cells, and $c$ denotes the kinetic rate of lactic acid.

Several variations of the original model \eqref{eq:acid model} can be found in the literature, that describe different biological mechanisms. For instance, in \cite{gm22}, the concentration of lactic acid $w$ undergoes diffusion at a constant rate and increases proportionally to the unknown $v$ until it reaches the saturation level $w = v$. To reduce the complexity of model \eqref{eq:acid model}, Gallay and Mascia \cite{gm22} consider the limit case $c\to\infty$, whereby the original dynamical differential equation describing the diffusion and growth of the lactic acid concentration $w$ is directly replaced by the algebraic constitutive relation $w=v$. This simplification yields the reduced model
\begin{equation}\label{eq:simple}
\begin{cases}
u_t=u(1-u)-d_1uv,\\[0.2cm]
v_t=D((1-u)v_x)_x+rv\left(1-v\right).
\end{cases}
\end{equation}

As observed in \cite{Casciari JJ 1992, cellular necrosis}, tumors rarely thrive in environments with a pH more acidic than 6.3,
which suggests that excessive acidity may inhibit tumor-cell survival. Building on this insight, McGillen et al. \cite{J.B. McGillen JMB} proposed that incorporating a term for acid-induced tumor cell mortality aligns with biological realism and added the term for acid-induced tumor cell death into \eqref{eq:acid model}. Then \eqref{eq:acid model} is transformed into the McGillen-Gaffney-Martin-Maini model
\begin{equation}\label{eq:generalized acid model}
\begin{cases}
u_t=u(1-u-a_2v)-d_1uw,\\[0.2cm]
v_t=D((1-u)v_x)_x+rv\left(1-a_1u-v\right)-d_2vw,\\[0.2cm]
w_t=w_{xx}+c(v-w),
\end{cases}
\end{equation}
where $d_2$ describes the mortality rate of tumor cells induced by lactic acid. For more general models,  Li et al. \cite{2024 Lifang ZAMP} considered
\begin{equation}\label{eq:The most general acid model}
\begin{cases}
u_t=(D_1(v)u_x)_x+u\left(1-u-a_2v\right)-d_1uw,\\[0.2cm]
v_t=(D_2(u)v_x)_x+rv\left(1-a_1u-v\right)-d_2vw,\\[0.2cm]
w_t=\Delta w+c(v-w),
\end{cases}
\end{equation}
 where $D_1(\cdot)$ and $D_2(\cdot)$ are non-negative functions representing the diffusion rates of healthy cells and tumor cells, respectively.

To gain deeper insight into tumor invasion dynamics, researchers have extensively investigated the aforementioned models via numerical and qualitative analysis. In the numerical analysis of \eqref{eq:generalized acid model}, McGillen et al. adopted numerical simulations and nonstandard asymptotic analysis within a traveling wave framework in \cite{J.B. McGillen JMB}. Through these approaches, they clarified the diverse tumor behaviors captured by the model and revealed how key parameters regulate the speed and profile of invasive tumor waves, further predicting the conditions for the formation of tumor interstitial space. In terms of qualitative analysis on bounded domains, a series of theoretical results have been established  for systems \eqref{eq:acid model}, \eqref{eq:generalized acid model} and \eqref{eq:The most general acid model} under homogeneous Neumann boundary conditions. For the original acid model \eqref{eq:acid model}, Fu and Cui \cite{2009 Global existence and stability} proved the global existence of classical solutions using energy estimates and bootstrap arguments, and established the global asymptotic stability of equilibria via Lyapunov functions. Later, Tao, Qi and Zhou \cite{tqz21} further explored the long-time behavior of solutions by applying   the method of super- and sub-solutions. Regarding the generalized acid model \eqref{eq:generalized acid model}, Tao and Tello derived the global stability of heterogeneous steady states using the rectangular method in \cite{Tao}. Subsequently, Li, Yao and Yu improved these results in \cite{2023 Li Fang JDE}, where they established the linear and global stability of all equilibria by combining Lyapunov function techniques, the rectangular method, and arguments based on super- and sub-solutions. Most recently, in \cite{2024 Lifang ZAMP}, the same authors extended their previous work to investigate the linear and global stability of the most generalized model \eqref{eq:The most general acid model}. In particular, they achieved a complete characterization of the linear stability of equilibria by reformulating the linearized stability analysis as an algebraic problem.

One interesting feature of the Gatenby-Gawlinski model \eqref{eq:acid model} is that numerical results confirm the existence of invasive traveling wave fronts, which characterize the invasion of tumor cells into healthy tissues. For the simplified model \eqref{eq:simple}, Gallay and Mascia employed smooth diffeomorphisms and phase-plane analysis to prove the existence of traveling wave solutions. Specifically, for the speed $\theta>0$, they obtained wave solutions connecting the equilibria $(1,0)$ and $(0,1)$, as well as those connecting $(1,0)$ and $(1-d_1,1)$. The former solution corresponds to complete tumor invasion, while the latter describes a coexistence state of tumor cells and healthy tissues. For the original model \eqref{eq:acid model}, under the parameter settings $D=c=r=1$ and $0<d_1<1$, Chen and Qi \cite{ChenQi2025} established the existence of traveling wave solutions connecting $(1,0,0)$ and $(1-d_1,1,1)$ for the speed $\theta>2\sqrt{d_1}$. Moreover, such wave solutions are unique up to translation. The core technique proposed in \cite{ChenQi2025} is to decompose the coupled system into three independent subproblems, analyze the existence, monotonicity and asymptotic behavior of solutions for each equation separately, and transform the original problem into a fixed point problem for a composite mapping. The main difficulty lies in analyzing non-autonomous differential equations with infinite boundary conditions. 

From the above statements, inspired by the work in \cite{2024 Lifang ZAMP}, we further investigate an acid-mediated tumor invasion model with more general diffusion functions $D(u)$, which satisfy the following hypothesis
\begin{enumerate}
\item[($H_1$)] {$D(1)=0$, $D(u)>0$ for $u\in[0,1)$, $D\in C^1([0,1])$, and $D'(u)<0$ for $u\in[0,1]$.}
\end{enumerate}
Then \eqref{eq:acid model} is converted into
\begin{equation}\label{eq:main-model}
\begin{cases}
u_t=u(1-u)-d_1uw,\\[0.2cm]
v_t=(D(u)v_x)_x+rv(1-v),\\[0.2cm]
w_t=w_{xx}+c(v-w),
\end{cases}
\end{equation}
where $d_1,r,c>0$. In the present work, we focus on the malignant invasion regime $d_1>1$; its biological interpretation in terms of the relevant equilibria will be discussed in Section 2.

The density-limited diffusion term in the second equation of \eqref{eq:main-model} constitutes the core feature of the proposed model, which is both degenerate and nonlinear. Consequently, classical methods for linear reaction-diffusion systems are no longer applicable to this model. Density-dependent diffusion terms have been widely studied in chemotaxis systems. For the parabolic-elliptic chemotaxis system
\begin{equation}\label{eq:Shen model}
\left\{\begin{array}{ll}
u_t={\Delta u}-\chi \nabla \cdot (u\nabla v)+u(1-u),\\[0.2cm]
0={\Delta v}-v+u.
\end{array}\right.
\end{equation}
Salako and Shen applied the method of super- and sub-solutions and the Schauder fixed-point theorem to prove the existence of traveling wave solutions connecting $(0,0)$ and $(1,1)$ when $0<\chi<\frac{1}{2}$ in \cite{Salako Shen 2017 DCDSB}. When $0<\chi<1$, for every nonnegative initial $u_0(\cdot)$ 
with non-empty compact support $\operatorname{supp} u_0$, they further obtained the range of propagation speed of \eqref{eq:Shen model}. Moreover, as $0<\chi<\frac{1}{2}$, the propagation speed is equal to $2$. Another model is the two-component density-suppressed motility reaction-diffusion system,
\begin{equation}\label{eq:Wang model}
\left\{\begin{array}{ll}
u_t=\Delta(\gamma(v)u)+u(a-bu),\\[0.2cm]
v_t=\Delta v+u-v,
\end{array}\right.
\end{equation}
where $\gamma(v)$ is called the motility function satisfying
\begin{equation*}
\gamma(v)=\frac{1}{(1+v)^m},\quad m>0,
\end{equation*}
where $a$, $b>0$ are positive constants accounting for the growth and death rates of bacterial cells. Li and Wang employed the method of super- and sub-solutions and parabolic estimates to prove the existence of traveling wave solutions of \eqref{eq:Wang model} connecting $(0,0)$ and $(\frac{a}{b},\frac{a}{b})$ in \cite{2021 Lijing wangzhian}. 

To overcome the nonlinear diffusion term, they first proved that an auxiliary linear diffusion equation admits a solution by constructing appropriate super- and sub-solutions. Then they showed that this solution satisfies an elliptic equation. The traveling wave solution is obtained as a fixed point of the operator corresponding to the elliptic equation.

Another feature of \eqref{eq:main-model} is the coupling of ODE and PDE, which can also be found in \cite{OuChunhua}. Here, they considered Lotka-Volterra competition reaction-diffusion system with partial diffusion
\begin{equation*}
\left\{\begin{array}{ll}
u_t=u_{xx}+u(1-u-k_1v),\\[0.2cm]
v_t=v(1-v-k_2u).
\end{array}\right.
\end{equation*}
They treated $u$ as a known function in the second equation and solved for $v$. Then they substituted the resulting expression for $v$ into the first equation to obtain an integro-differential equation on the unknown $u$ alone. The above two key challenges also exist in \eqref{eq:main-model}, and we will overcome them by combining the methods in \cite{OuChunhua,2021 Lijing wangzhian,Salako Shen 2017 DCDSB,ChenQi2025} to establish the existence of traveling wave solutions describing the invasion of cancer cells into healthy tissue.

We now summarize the main difficulties and the contributions of the present paper. The construction is not a direct consequence of the published result of Chen and Qi~\cite{ChenQi2025}, nor a direct application of the auxiliary parabolic fixed point method used by Li and Wang~\cite{2021 Lijing wangzhian}. In the published work of Chen and Qi~\cite{ChenQi2025}, the argument is developed for the special linear degeneracy $D(U)=1-U$ and for the coexistence end state in the corresponding parameter regime; hence it does not directly yield the complete invasion front considered here. The method of Li and Wang~\cite{2021 Lijing wangzhian} provides an auxiliary parabolic fixed point strategy, but it cannot be applied directly to the present model before the degeneracy in the tumor equation is removed. To overcome these difficulties, we first introduce a change of variables to eliminate the degeneracy from the highest order term, and then construct the transformed tumor profile as the stationary limit of an auxiliary parabolic problem. The use of a general decreasing coefficient $D(U)$ and the complete invasion limiting conditions require additional work to close the fixed point construction. In particular, since the transformed coefficient is non-autonomous and may degenerate at one end, we construct adapted generalized super- and sub-solutions; the sub-solution is obtained by patching together a zero part, a shifted Fisher--KPP semi-wave, and an exponential tail. This combined construction is the main technical point of the proof.

The rest of this paper is organized as follows. In Section~\ref{Preliminaries and Main Results}, we formulate the traveling wave problem, introduce the order interval used in the fixed point argument, and state the main results. In Section~\ref{sec:construction},
for each admissible tumor profile, we construct the corresponding acid profile and healthy-tissue profile, introduce the degenerate-variable transformation, and construct the transformed tumor profile through an auxiliary parabolic problem. In Section~\ref{sec:existence}, we prove the invariance, compactness and continuity of the fixed point map, obtain a fixed point by the Schauder fixed-point theorem, and then establish the strict bounds, monotonicity, end states and one-sided asymptotic estimates of the obtained traveling wave.

\section{Preliminaries and Main Results}\label{Preliminaries and Main Results}
In this section, we present some necessary preliminaries and state our main results. For the diffusion-free version of \eqref{eq:main-model}, four equilibrium points exist (as in \cite{producing lactic acid,J.B. McGillen JMB,2023 Li Fang JDE,2024 Lifang ZAMP}), which are listed as follows
\begin{enumerate}
\item[$\bullet$] $e_1=(0,0,0)$:\ absence of all tissues and acid, unstable.
\item[$\bullet$] $e_2=(1,0,0)$:\ 
healthy-cells at its carrying capacity and absence of both tumor cells and acid, unstable.
\item[$\bullet$] $e_3=(1-d_1,1,1)$:\ coexistence of tumor cells at their carrying capacity and healthy cells at a diminished level,  unstable if $d_1>1$,  stable if $0<d_1<1$. 
\item[$\bullet$] $e_4=(0,1,1)$:\  tumor cells at their carrying capacity,  and no healthy cells; positive and stable if $d_1>1$.
\end{enumerate}

It is demonstrated in \cite{producing lactic acid} that $d_1=1$ acts as the critical threshold governing the transition of tumor behavior from benign coexistence to malignant invasion. Accordingly, the parameter regime $d_1>1$ corresponds to the malignant case, where tumor cells tend to invade and replace healthy tissues. This invasion scenario can be mathematically characterized by traveling wave solutions connecting the tumor-dominant equilibrium $(0,1,1)$ to the healthy equilibrium $(1,0,0)$. This explains the standing assumption $d_1>1$ adopted throughout the paper. Based on the above biological background and equilibrium analysis, we now rigorously define the traveling wave solutions investigated in this work.

\begin{definition}
A nonnegative solution $(u(x,t),v(x,t),w(x,t))$ is called a traveling wave solution of \eqref{eq:main-model} connecting $e_4$ to $e_2$ with wave speed $\theta$ if it is of the form
\begin{equation*}
(u(x,t),v(x,t),w(x,t))=(U(z),V(z),W(z)),\quad z=x-\theta t,
\end{equation*}
and satisfies
\begin{equation}\label{eq:tw-system}
\begin{cases}
\theta U'+U(1-U-d_1W)=0,\\[0.2cm]
(D(U) V')' +\theta V'+rV(1-V)=0,\\[0.2cm]
W''+\theta W'+c(V-W)=0,
\end{cases}
\end{equation}
where the prime $'$ denotes differentiation with respect to the traveling wave variable $z$. In addition, the solution satisfies the following asymptotic boundary conditions at infinity
\begin{equation}\label{eq:tw-bc}
(U(-\infty), V(-\infty), W(-\infty))=(0, 1, 1),\
(U(+\infty), V(+\infty), W(+\infty))=(1, 0, 0).\\[0.2cm]
\end{equation}
In what follows, any traveling wave solution of \eqref{eq:main-model} refers exclusively to the nonnegative solution of the boundary value problem \eqref{eq:tw-system}-\eqref{eq:tw-bc}.
\end{definition}

We aim to establish the existence of traveling wave solutions for \eqref{eq:main-model} by transforming the original boundary value problem \eqref{eq:tw-system}-\eqref{eq:tw-bc}  into a standard fixed point problem. To this end, we first introduce the notations and function spaces required for the subsequent analysis. Consider the following two characteristic equations
\begin{equation}\label{eq:beta-characteristic-pre}
\lambda^2-\theta\lambda+a_*=0,
\end{equation}
and
\begin{equation}\label{eq:W-characteristic}
\lambda^2+\theta\lambda-c=0,
\end{equation}
where $a_*:=rD(0)$. For any wave speed satisfying $\theta\ge 2\sqrt{a_*}$, direct computation shows that \eqref{eq:beta-characteristic-pre} and \eqref{eq:W-characteristic} admit real roots given by

\begin{equation}\label{eq:lambda-pm0}
\lambda_1^-:=\frac{\theta-\sqrt{\theta^2-4a_*}}{2}>0,\qquad\lambda_1^+:=\frac{\theta+\sqrt{\theta^2-4a_*}}{2}>0,
\end{equation}
and
\begin{equation}\label{eq:lambda-pm}
\lambda_2^-:=\frac{-\theta-\sqrt{\theta^2+4c}}{2}<0,\qquad\lambda_2^+:=\frac{-\theta+\sqrt{\theta^2+4c}}{2}>0.
\end{equation}
Clearly, the inequality $ \lambda_1^-<\lambda_1^+$ holds. We further define the positive constants
\begin{equation}\label{eq:define-gamma-star}
\gamma_*=\frac{-\theta+\sqrt{\theta^2+a_*}}{2},\quad a_0=\frac{a_*}{4}=\frac{rD(0)}{4},\quad \kappa=\frac{-c_1+\sqrt{c_1^2+4a_0}}{2},
\end{equation}
where $c_1>\max\{2\sqrt{a_0},\theta\}$. Based on \eqref{eq:lambda-pm0}-\eqref{eq:define-gamma-star}, we  have 
\begin{equation}\label{eq:lambda-choice}
0<\lambda<\min\left\{\frac{ \lambda_1^-}{D(0)},-\lambda_2^-\right\},\quad0<\mu<\frac{\rho}{D(0)},\quad\kappa <\rho<\gamma_*.
\end{equation}
Throughout the paper, $\lambda_1^\pm$ and $\lambda_2^\pm$ denote the roots of (2.3) and (2.4), respectively, while $\lambda$ and $\mu$ in (2.8) are auxiliary decay parameters.

With the above notations, we introduce the function space of bounded uniformly continuous functions
\begin{equation*}
X:=C^b_{\mathrm{unif}}(\mathbb R)=\left\{v\in C(\mathbb R): v\hbox{ is uniformly continuous on }\mathbb R
\hbox{ and }\|v\|_\infty<\infty\right\},
\end{equation*}
where $\|v\|_\infty:=\|v\|_{L^\infty(\mathbb R)}$.Then $X$ is a Banach space equipped with the norm $\|\cdot\|_\infty$.

For any constant $L$, we define
\begin{equation*}
\mathcal K_L:=\left\{v\in X: \underline V_L(z)\le v(z)\le\overline V_L(z),\ z\in\mathbb R\right\},
\end{equation*}
where
\begin{equation*}
\overline V_L(z):=\begin{cases}
1, & z\le L,\\[0.2cm]
e^{-\lambda(z-L)}, & z>L,
\end{cases}\qquad
\underline V_L(z):=\begin{cases}
1-e^{\mu(z+L)}, & z<-L,\\[0.2cm]
0, & z\ge -L.
\end{cases}
\end{equation*}
By direct verification, these functions satisfy
\begin{equation}\label{eq:barrier-order-L}
0\le \underline V_L(z)\le \overline V_L(z)\le1,\qquad z\in\mathbb R,
\end{equation}
and possess the desired asymptotic behaviors at infinity
\begin{equation}\label{eq:barrier-limits-L}
\lim_{z\to-\infty}\underline V_L(z)=\lim_{z\to-\infty}\overline V_L(z)=1,\qquad
\lim_{z\to+\infty}\underline V_L(z)=\lim_{z\to+\infty}\overline V_L(z)=0.
\end{equation}

To prove the existence of traveling wave solutions, we shall use the topology of  local uniform convergence on $\mathcal K_L$. To this end, as in \cite{cellular necrosis}, we introduce the weighted local norm
\begin{equation*}
\|v\|_*:=\sum_{k=1}^{\infty}2^{-k}\|v\|_{L^\infty([-k,k])}.
\end{equation*}
For sequences in $\mathcal K_L$, convergence with respect to $\|\cdot\|_*$ is equivalent to  local uniform convergence on $\mathbb R$. More precisely, for any sequence $\{f_n\}\subset \mathcal K_L$ and any $f\in\mathcal K_L$,
\begin{equation*}
\|f_n-f\|_*\to0
\end{equation*}
if and only if
\begin{equation*}
f_n\to f
\end{equation*}
uniformly on every compact interval of $\mathbb R$. 

Moreover, $\mathcal K_L$ is nonempty, bounded, closed and convex with respect to this topology. Indeed,
\begin{equation*}
\frac{\underline V_L+\overline V_L}{2}\in\mathcal K_L,
\end{equation*}
and hence $\mathcal K_L\neq\emptyset$. For every $v\in\mathcal K_L$, \eqref{eq:barrier-order-L}  leads to
\begin{equation*}
0\le v(z)\le1,\qquad z\in\mathbb R,
\end{equation*}
so $\mathcal K_L$ is bounded in the sense that
\begin{equation*}
\|v\|_\infty\le1,\qquad \|v\|_*\le1,\qquad v\in\mathcal K_L.
\end{equation*}
The convexity of $\mathcal K_L$ follows directly from the pointwise order defining $\mathcal K_L$. Finally, if $v_n\in\mathcal K_L$ and $v_n\to v$ locally uniformly on $\mathbb R$, then, for every fixed $z\in\mathbb R$, passing to the limit in
\begin{equation*}
\underline V_L(z)\le v_n(z)\le\overline V_L(z)
\end{equation*}
yields
\begin{equation*}
\underline V_L(z)\le v(z)\le\overline V_L(z),\qquad z\in\mathbb R.
\end{equation*}
Moreover, by \eqref{eq:barrier-limits-L} and the squeeze theorem, $v$ has finite limits at $\pm\infty$, and hence $v$ is uniformly continuous on $\mathbb R$. Therefore $v\in\mathcal K_L$, and $\mathcal K_L$ is closed with respect to $\|\cdot\|_*$.
\medskip

We now state our main existence result for traveling wave solutions.
\begin{theorem}\label{thm:main}
Assume that $d_1>1$ and that hypothesis ($H_1$) holds. Then for any $\theta\ge 2\sqrt{a_*}=2\sqrt{rD(0)}$, equation \eqref{eq:main-model} admits a traveling wave solution.
\end{theorem}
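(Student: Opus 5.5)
The plan is a Schauder fixed point on $\mathcal K_L$, with $L$ large to be fixed later. Fix $v\in\mathcal K_L$. First solve the acid equation $W''+\theta W'-cW=-cv$ by the Green kernel
\begin{equation*}
W_v(z)=\frac{c}{\lambda_2^+-\lambda_2^-}\Big(\int_{-\infty}^z e^{\lambda_2^-(z-s)}v(s)\,ds+\int_z^{\infty}e^{\lambda_2^+(z-s)}v(s)\,ds\Big).
\end{equation*}
This gives $0\le W_v\le 1$, $W_v(-\infty)=1$, and $W_v(z)\le Ce^{-\lambda(z-L)}$ for $z>L$; the last bound uses $\lambda<-\lambda_2^-$. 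Next solve the Bernoulli-type ODE $\theta U'=-U(1-U-d_1W_v)$ explicitly, normalized by $U(+\infty)=1$:
\begin{equation*}
U_v(z)=\Big(1+\frac1\theta\int_z^{\infty}e^{\frac1\theta\int_z^s(1-d_1W_v)}\,ds\Big)^{-1}.
\end{equation*}
The integrability of $W_v$ at $+\infty$ makes this well defined, with $0<U_v<1$. Since $W_v\to1$ at $-\infty$ and $d_1>1$, we get $U_v(-\infty)=0$. Moreover $1-U_v(z)$ is controlled by $\int_z^\infty W_v$, so $1-U_v\le Ce^{-\lambda(z-L)}$ for $z$ large.

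To remove the degeneracy, set $\Phi(V)$ or, more conveniently, reparametrize by a new variable $\xi$ with $d\xi/dz=1/\sqrt{D(U_v(z))}$ (or $1/D(U_v)$). In the new variable the tumor equation $(D(U_v)V')'+\theta V'+rV(1-V)=0$ becomes a uniformly elliptic non-autonomous equation. Its transport and reaction coefficients are continuous, bounded near $-\infty$, and carry a factor vanishing like $D(U_v)$ at the healthy end. Define $\mathcal T(v)$ as the stationary limit, as $t\to\infty$, of the auxiliary uniformly parabolic problem $V_t=\text{(transformed operator)}$ started from $\overline V_L$. For this limit to exist and lie in $\mathcal K_L$, I would show that $\overline V_L$ is a supersolution and $\underline V_L$ (transformed) is a subsolution for every $v\in\mathcal K_L$. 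Then the solution is monotone nonincreasing in $t$, squeezed between the barriers, and local Schauder estimates give convergence to a stationary solution $V_v$ with $\underline V_L\le V_v\le \overline V_L$.

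The supersolution check is routine: for $z>L$, $D(U_v)\lambda^2-\theta\lambda+r\le0$ must hold. Here the exponential closeness of $U_v$ to $1$ and the choice $\lambda<\lambda_1^-/D(0)$ are used, with $L$ large absorbing the error terms $D'(U_v)U_v'$. The main obstacle is the subsolution on the invasion side $z<-L$, where the coefficient is non-autonomous. A plain $1-e^{\mu(z+L)}$ may fail near its junction with $0$. As announced in the introduction, I would patch together a zero part, a shifted Fisher--KPP semi-wave with speed $c_1$ and diffusion of order $a_0$, and an exponential tail with rate $\kappa<\rho<\gamma_*$. I would verify the differential inequality on each piece and the correct kink condition at each junction (the max of subsolutions). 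This uses $D(U_v)\ge D(0)/4$-type lower bounds on the region where $U_v$ is small, which holds for $z\le -L$ uniformly in $v$ because $W_v\ge$ some positive constant there. If this direct patching fails, the fallback is to replace $\underline V_L$ by this patched function in the definition of $\mathcal K_L$.

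Finally I would verify the Schauder hypotheses for the $\|\cdot\|_*$ topology. Invariance follows from the barriers. Compactness follows from uniform $C^{2,\alpha}_{loc}$ bounds on $V_v$, obtained from Schauder estimates for the transformed equation and pulled back through the change of variables, together with Arzel\`a--Ascoli. Continuity follows because $v_n\to v$ locally uniformly implies $W_{v_n}\to W_v$ and $U_{v_n}\to U_v$ locally uniformly, by dominated convergence using the uniform exponential tails. By uniqueness of the bounded solution between the barriers, which would be proved via a comparison/sliding argument, every limit point of $V_{v_n}$ equals $V_v$. A fixed point $V=\mathcal T(V)$ then gives $(U_V,V,W_V)$ solving \eqref{eq:tw-system}. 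The limits \eqref{eq:tw-bc} follow from the barrier limits \eqref{eq:barrier-limits-L} and the properties of $W_v$ and $U_v$ above, proving Theorem~\ref{thm:main}.
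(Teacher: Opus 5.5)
Your invariance step has a genuine gap. You want the barriers $\overline V_L,\underline V_L$ that define $\mathcal K_L$ to be, after the change of variables, a super- and a subsolution of the auxiliary problem for every $v\in\mathcal K_L$. The substitution $d\eta/dz=-1/D(U_v)$ only multiplies the operator by $D(U_v)>0$, so the supersolution condition for $\overline V_L$ on $z>L$ is
\[
D(U_v)\lambda^2-\lambda D'(U_v)U_v'-\theta\lambda+r\bigl(1-e^{-\lambda(z-L)}\bigr)\le 0 .
\]
Letting $z\to+\infty$ forces $\lambda\ge r/\theta$, but \eqref{eq:lambda-choice} only bounds $\lambda$ from above.

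More seriously, the term $-\lambda D'(U_v)U_v'$ cannot be absorbed by enlarging $L$, because everything to the right of $L$ is translation invariant. Take $v=\overline V_L\in\mathcal K_L$. Then $U_v(L+\zeta)=U_{\overline V_0}(\zeta)$ for every $L$. Since $W_v$ is then decreasing, $U_v'>0$ by the argument in the proof of Theorem~\ref{thm:pro}. At $z=L^+$, where $r(1-\overline V_L)=0$, you would need $\lambda D(u_0)+|D'(u_0)|\,U_{\overline V_0}'(0)\le\theta$, with $u_0:=U_{\overline V_0}(0)$. Neither $u_0$ nor $U_{\overline V_0}'(0)>0$ depends on $D$ once $D(0)$ and $\lambda$ are fixed, so this inequality fails whenever $D$ is steep enough at $u_0$.

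On the other side, you already note that $1-e^{\mu(z+L)}$ is not a subsolution near $z=-L$. Your fallback is to build the patched KPP profile into $\mathcal K_L$. That breaks down if the patch lives in the transformed variable, where $\omega,\kappa,\rho,\gamma_*$ are defined: it then depends on $v$ through $\eta_v$, so $\mathcal K_L$ is no longer a fixed convex set.

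The missing idea is to decouple the barriers of the parabolic problem from those defining $\mathcal K_L$. In $\eta$ the operator is $\partial_\eta^2-\theta\partial_\eta+a_v(\eta)$, with constant drift and no $D'$ term. There $\overline\Phi=\min\{e^{\lambda_1^-\eta},1\}$ is a supersolution using only $a_v\le a_*$. The patched $\underline\Phi_\rho$ is a subsolution using only $a_v\ge a_0$ on $[R_1,\infty)$. It consists of zero, then the semi-wave $\omega$ with $c_1>\theta$ and $a_0=a_*/4$, then the tail $1-A_\rho e^{-\rho\eta}$. Together these give the $v$-uniform tails \eqref{eq:left-tail-eta}--\eqref{eq:right-tail-eta}.

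Invariance $F(\mathcal K_L)\subset\mathcal K_L$ then comes not from comparison with $\overline V_L,\underline V_L$, but from pulling these tails back through \eqref{eq:eta-growth}. That is where $\lambda<\lambda_1^-/D(0)$ and $\mu<\rho/D(0)$ are really used, and where $L\ge\frac{D(0)}{\rho}\ln C_R$ absorbs the constant.

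Two smaller points. First, your formula for $U_v$ diverges: the exponent has the wrong sign and the factor $d_1W_v(s)$ is missing. It should read $1/U_v-1=\frac{d_1}{\theta}\int_z^\infty W_v(s)e^{-\frac1\theta\int_z^s(1-d_1W_v)}\,ds$.

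Second, getting continuity of $F$ from uniqueness of the stationary profile is a good idea, since continuous dependence of the flow alone only yields $\limsup_n\widetilde V_{v_n}\le\widetilde V_v$. However, a comparison proof of uniqueness needs every competitor to have the fast decay $\widetilde V\le e^{\lambda_1^-\eta}$. The reason is that, in $z$, the linearization at $0$ also has a mode decaying only like $e^{-rz/\theta}$. This again points to using the $\eta$-barrier.
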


Furthermore, we establish the strict positivity and monotonicity properties of the traveling wave profile obtained in Theorem \ref{thm:main}.

\begin{theorem}\label{thm:pro}
Suppose that all assumptions in Theorem \ref{thm:main} hold, and let $(U,V,W)$ be the traveling wave solution to \eqref{eq:main-model} constructed in Theorem~\ref{thm:main}. Then
\begin{equation*}
0<U(z)<1,\qquad 0<V(z)<1,\qquad 0<W(z)<1,\qquad z\in\mathbb R,
\end{equation*}
and
\begin{equation*}
U'(z)>0,\qquad V'(z)<0,\qquad W'(z)<0,\qquad z\in\mathbb R.
\end{equation*}
\end{theorem}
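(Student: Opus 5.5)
We work directly with the profile equations \eqref{eq:tw-system}, the limits \eqref{eq:tw-bc}, and the information that the construction of Theorem~\ref{thm:main} builds in. Namely, $V\in\mathcal K_L$, so $0\le V\le1$. The acid profile $W$ is given by the Green kernel of $W''+\theta W'-cW=-cV$. The healthy profile $U$ comes from the explicit integral formula for the Bernoulli equation $\theta U'=-U(1-U-d_1W)$ with $U(+\infty)=1$. The proof proceeds in three stages: strict bounds, monotonicity of $V$, and monotonicity of $U$ and $W$.

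\textbf{Step 1: strict bounds.} First, $W=cG*V$ with a strictly positive kernel $G$ of total mass $1/c$. Since $0\le V\le1$ and $V\not\equiv0,1$, this gives $0<W<1$. Next, the Bernoulli formula gives
\[
\frac{1}{U(z)}=1+\int_z^{\infty}\frac{d_1}{\theta}W(s)\exp\Big(\frac1\theta\int_z^s(1-d_1W)\Big)\,ds ,
\]
which is valid wherever the integral converges. Since $W>0$, this yields $0<U<1$. Now $U<1$ gives $D(U)>0$ by ($H_1$), so the $V$-equation is uniformly elliptic on every compact set. Because $0\le V\le1$, the strong maximum principle applies at an interior minimum $V=0$, since the reaction term $rV(1-V)$ vanishes there. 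It also applies at an interior maximum $V=1$, after writing the equation for $1-V$. In either case $V$ would be constant, which contradicts \eqref{eq:tw-bc}. Hence $0<V<1$.

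\textbf{Step 2: monotonicity of $V$.} Here I would use the sliding method. The $V$-equation depends on $U$, so $V$ cannot be treated alone. Instead, I would first show that the fixed-point iteration preserves monotonicity: if $V$ is nonincreasing, then $W$ is nonincreasing, since translation commutes with convolution. It then follows that $1-U-d_1W$ is nondecreasing, so the integral formula makes $U$ nondecreasing. Finally, for the auxiliary parabolic problem defining the transformed tumor profile, one starts from a nonincreasing initial datum and uses the comparison principle on shifts $\tilde V(\cdot+h)$, $h>0$. Monotonicity of the coefficients in $z$ is exactly what makes the shifted solution a subsolution. Therefore nonincreasing profiles form a closed convex subset of $\mathcal K_L$ that is invariant under the map, and the Schauder fixed point can be taken inside it. This gives $V'\le0$, and likewise $W'\le0$ and $U'\ge0$. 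Strict inequality follows by differentiating the equation. The function $p=V'\le0$ satisfies a linear elliptic equation, with a term involving $D'(U)U'V'$ on the right-hand side. If $p$ vanished somewhere, the strong maximum principle, combined with the sign of that term, would force $p\equiv0$, which contradicts the limits.

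\textbf{Step 3: monotonicity of $U$ and $W$.} For $W$, differentiating the representation gives $W'=cG*V'$, which is strictly negative because $V'<0$ and $G>0$. For $U$, set $q=U'$ and differentiate the $U$-equation:
\[
\theta q'+(1-2U-d_1W)q=d_1UW'<0 .
\]
Solving this linear ODE with $q\to0$ as $z\to+\infty$, the negative forcing and the integrating factor give $q>0$. Alternatively, $q>0$ can be read directly from the integral formula, because $W$ is strictly decreasing.

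\textbf{Main obstacle.} The hardest part is Step 2, showing that monotonicity survives the degenerate-variable transformation and the auxiliary parabolic limit. The transformed coefficient is non-autonomous, and the shifted function is a subsolution only when the coefficient ordering has the correct sign. I would verify this by checking that $D(U(z+h))\le D(U(z))$, which holds because $U$ is nondecreasing and $D$ is decreasing. I would then confirm that the transformed equation inherits this ordering.
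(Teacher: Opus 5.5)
Your Steps 1 and 3 are fine. In Step 3 for $U$, both the linearized equation $\theta q'+(1-2U-d_1W)q=d_1UW'<0$ and the integral formula are valid alternatives to the paper's comparison $U\ge 1-d_1W$.

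Step 2, however, has a genuine gap. Your invariant-cone argument restricts $F$ to the nonincreasing elements of $\mathcal K_L$, checks that $v\mapsto W_v\mapsto U_v\mapsto a_v\mapsto\widetilde V_v$ preserves monotonicity, and reruns Schauder there. This is internally sound, but it only shows that \emph{some} fixed point of $F$ is nonincreasing. The statement concerns the wave already constructed in Theorem~\ref{thm:main}, i.e.\ the Schauder--Tychonoff fixed point in all of $\mathcal K_L$. Schauder gives no uniqueness, so nothing you do in the subcone tells you that this particular $V$ is monotone. You would be proving a weaker statement by changing the construction. Your strictness step is also not justified as written. Obtaining a linear equation for $p=V'$ requires differentiating the term $D'(U)U'V'$, hence $D''$. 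But $(H_1)$ only gives $D\in C^1$, so $V$ need not be $C^3$.

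The missing idea is that monotonicity of $V$ is automatic for \emph{any} solution of the tumor equation with $0<V<1$ and $V(-\infty)=1$. It uses nothing about $U$ beyond $D(U)>0$. Take the integrating factor $A(z)=D(U(z))\exp\bigl(\int_{z_0}^{z}\theta/D(U(s))\,ds\bigr)>0$. The equation becomes $(AV')'=-\frac{rA}{D(U)}V(1-V)<0$, so $AV'$ is strictly decreasing. If $V'(z_1)\ge0$ for some $z_1$, then $V'>0$ on $(-\infty,z_1)$, whence $V(z_1)\ge V(-\infty)=1$, contradicting $V<1$. This gives strict $V'<0$ everywhere, for the constructed wave itself. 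It needs no sliding, no monotone subcone, and no differentiation of the equation. This is the paper's argument, after which $W'<0$ and $U'>0$ follow as in your Step 3. So the step you identified as the main obstacle is actually the easiest one.

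Two minor slips. In your Bernoulli formula the exponent should be $-\frac1\theta\int_z^s(1-d_1W)$, not $+$. Also, the auxiliary problem is posed in $\eta$, where $\overline\Phi$ and $a_v$ are nondecreasing. The relevant shift $\mathcal V_v(\cdot+h,t)$, $h>0$, is therefore a supersolution rather than a subsolution.
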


In addition, the constructed wave has one-sided exponential convergence at both spatial infinities; a concise statement and its proof are given in Proposition~\ref{prop:asymptotic-estimates}.

\section{Construction of the Map}\label{sec:construction}

In this section, for each $v\in\mathcal K_L$, we construct the associated functions $W_v$, $U_v$, and $V_v$, which will be used to define the fixed point map. We then analyze the crucial properties of $U_v$, $V_v$ and $W_v$, including uniform boundedness and long-time asymptotic behavior. This reduction transforms the original existence problem into a fixed point problem linked to the second equation in \eqref{eq:tw-system}.

For any $v\in\mathcal K_L$, define
\begin{equation}\label{eq:W-convolution}
W_v(z):=\int_{\mathbb R}G(y)v(z-y)\,dy,\qquad z\in\mathbb R,
\end{equation}
where
\begin{equation*}
G(y):=
\frac{c}{\lambda_2^+-\lambda_2^-}
\begin{cases}
e^{\lambda_2^-y}, & y\geqslant0,\\[0.2cm]
e^{\lambda_2^+y}, & y<0,
\end{cases}
\end{equation*}
and $\lambda_2^\pm$ are given in \eqref{eq:lambda-pm}. Clearly,
\begin{equation*}
G(y)>0,\quad G\in C(\mathbb R)\cap L^1(\mathbb R),
\end{equation*}
and
\begin{equation}\label{eq:G-mass-one}
\int_{\mathbb R}G(y)\,dy=1.
\end{equation}
Since $v\in\mathcal K_L$, $v$ is not identically zero. Then combining with \eqref{eq:barrier-order-L}, we obtain
\begin{equation*}
0\le v(z)\le1,\qquad z\in\mathbb R.
\end{equation*}
Thus, for each fixed $z\in\mathbb R$, we have
\begin{equation}\label{eq:Wv-integrand-bound}
0\le G(y)v(z-y)\le G(y),\qquad y\in\mathbb R,
\end{equation}
and $G(y)v(z-y)$ is not identically zero. Since $G\in L^1(\mathbb R)$, it follows from \eqref{eq:Wv-integrand-bound} that, for each fixed $z\in\mathbb R$, $y\mapsto G(y)v(z-y)$ lies in $L^1(\mathbb R)$. Hence the integral in \eqref{eq:W-convolution} is well defined. With the above preparations, the  function $W_v$ defined by \eqref{eq:W-convolution} satisfies the following properties.
\begin{lemma}\label{lem:Wv-representation}
For any $v\in\mathcal K_L$, let $W_v$ be defined in \eqref{eq:W-convolution}. Then $W_v\in C^2(\mathbb R)$ is the unique bounded solution to
\begin{equation}\label{eq:W-equation}
W_v''+\theta W_v'+c(v-W_v)=0,
\end{equation}
satisfying
\begin{equation}\label{eq:W-basic-ends}
W_v(-\infty)=1,\quad W_v(+\infty)=0.
\end{equation}
Moreover,
\begin{equation*}
0<W_v(z)<1,\qquad z\in\mathbb R.
\end{equation*}
There exists a constant $C_W=C_W(L)>0$, independent of $v$, such that
\begin{equation}\label{eq:W-tail}
0<W_v(z)\le C_We^{-\lambda z},\qquad|W_v'(z)|\le C_We^{-\lambda z},\qquad z\ge L.
\end{equation}
Furthermore, for any compact interval $I\subset\mathbb R$, there exists $C_{IW}>0$, independent of $v$, such that
\begin{equation*}
\|W_v\|_{C^2(I)}\le C_{IW}.
\end{equation*}
\end{lemma}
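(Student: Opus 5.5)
The plan is to read everything off the explicit kernel $G$. First I would check that $G$ is the Green function of $-\partial_z^2-\theta\partial_z+c$ on $\mathbb R$. Since $\lambda_2^\pm$ are the roots of \eqref{eq:W-characteristic}, $G$ solves $G''+\theta G'-cG=0$ on each half-line. It is continuous at $0$, and its derivative jumps by $G'(0^+)-G'(0^-)=c(\lambda_2^--\lambda_2^+)/(\lambda_2^+-\lambda_2^-)=-c$.

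Next I would write $W_v$ in the split form
\[
W_v(z)=\frac{c}{\lambda_2^+-\lambda_2^-}\Big(\int_{-\infty}^z e^{\lambda_2^-(z-s)}v(s)\,ds+\int_z^{\infty}e^{\lambda_2^+(z-s)}v(s)\,ds\Big).
\]
Differentiating under the integral sign twice uses only the continuity and boundedness of $v$. This gives $W_v\in C^2$ together with \eqref{eq:W-equation}, the boundary terms producing exactly $-cv$. For uniqueness, the difference of two bounded solutions solves the homogeneous equation. Its general solution is $Ae^{\lambda_2^+z}+Be^{\lambda_2^-z}$, and boundedness forces $A=B=0$ because $\lambda_2^+>0>\lambda_2^-$.

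For the bounds I would use \eqref{eq:G-mass-one}, positivity of $G$, and $0\le v\le 1$. These give $0\le W_v\le 1$. For strict inequalities, $W_v(z)=0$ would force $v\equiv0$, but $v\ge\underline V_L>0$ on $(-\infty,-L)$. Likewise $W_v(z)=1$ would force $v\equiv1$, but $v\le\overline V_L<1$ on $(L,\infty)$. The end states follow by dominated convergence: $v(z-y)\to 1$ as $z\to-\infty$ by the squeeze from $\underline V_L$ and $\overline V_L$, and $v(z-y)\to0$ as $z\to+\infty$ similarly, with dominating function $G$.

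The main step is the tail estimate \eqref{eq:W-tail}. For $z\ge L$ I would bound $v\le\overline V_L\le e^{-\lambda(s-L)}$, which holds for all $s$ since $\overline V_L\le1\le e^{-\lambda(s-L)}$ when $s\le L$. Then
\[
W_v(z)\le \frac{c\,e^{\lambda L}}{\lambda_2^+-\lambda_2^-}\Big(\int_{-\infty}^z e^{\lambda_2^-(z-s)}e^{-\lambda s}ds+\int_z^\infty e^{\lambda_2^+(z-s)}e^{-\lambda s}ds\Big).
\]
The first integral converges and equals $e^{-\lambda z}/(-\lambda_2^--\lambda)$ precisely because $\lambda<-\lambda_2^-$ by \eqref{eq:lambda-choice}; this is where that choice is used. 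The second equals $e^{-\lambda z}/(\lambda_2^++\lambda)$.

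For $W_v'$, differentiate the split form; the boundary terms cancel, leaving
\[
W_v'(z)=\frac{c}{\lambda_2^+-\lambda_2^-}\Big(\lambda_2^-\int_{-\infty}^z\cdots+\lambda_2^+\int_z^\infty\cdots\Big),
\]
which is bounded in absolute value by the same integrals times $\max|\lambda_2^\pm|$. This gives $C_W$ depending only on $L$, $\lambda$, $\theta$ and $c$.

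Finally, on a compact interval $I$, $|W_v|\le1$ and $|W_v'|\le c\max|\lambda_2^\pm|/(\lambda_2^+-\lambda_2^-)\cdot(1/|\lambda_2^-|+1/\lambda_2^+)$. The equation then gives $|W_v''|\le\theta|W_v'|+c(|v|+|W_v|)\le \theta|W_v'|+2c$. These bounds are uniform in $v$, which yields $C_{IW}$.
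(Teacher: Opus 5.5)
Your proposal is correct and follows essentially the same route as the paper: the same split $W_v=I_1+I_2$ with direct differentiation, dominated convergence for the end states, the homogeneous-ODE argument for uniqueness, and the tail estimate that uses $\lambda<-\lambda_2^-$. Two of your steps are slightly cleaner. Your global bound $v(s)\le e^{-\lambda(s-L)}$ replaces the paper's two-piece estimate of $I_1$. Your use of $v\le\overline V_L<1$ on $(L,\infty)$ properly justifies the strict inequality $W_v<1$, which the paper only supports by noting that the integrand does not vanish identically (an argument that gives $W_v>0$ but not $W_v<1$).
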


\begin{proof}
By the definition of $G$, we rewrite \eqref{eq:W-convolution} as
\begin{equation}\label{eq:Wv-expanded}
W_v(z)=I_1(z)+I_2(z),\qquad z\in\mathbb R,
\end{equation}
where
\begin{equation*}
I_1(z):=\frac{c}{\lambda_2^+-\lambda_2^-}\int_{-\infty}^{z}e^{\lambda_2^-(z-s)}v(s)\,ds,\qquad
I_2(z):=\frac{c}{\lambda_2^+-\lambda_2^-}\int_z^{+\infty}e^{\lambda_2^+(z-s)}v(s)\,ds.
\end{equation*}
Since $v$ is bounded and uniformly continuous, differentiating \eqref{eq:Wv-expanded} yields
\begin{equation*}
W_v'(z)=\lambda_2^- I_1(z)+\lambda_2^+ I_2(z),
\end{equation*}
and
\begin{equation}\label{eq:Wv-second}
W_v''(z)=-cv(z)+(\lambda_2^-)^2I_1(z)+(\lambda_2^+)^2I_2(z).
\end{equation}
As $\lambda_2^\pm$ satisfy \eqref{eq:W-characteristic}, combining \eqref{eq:Wv-expanded}-\eqref{eq:Wv-second}, we get
\begin{equation*}
\begin{aligned}
W_v''+\theta W_v'-cW_v&=-cv+\bigl((\lambda_2^-)^2+\theta\lambda_2^- -c\bigr)I_1+\bigl((\lambda_2^+)^2+\theta\lambda_2^+ -c\bigr)I_2\\[0.2cm]
&=-cv.
\end{aligned}
\end{equation*}
Since $v\in\mathcal K_L$, \eqref{eq:barrier-limits-L} implies
\begin{equation*}
v(-\infty)=1,\qquad v(+\infty)=0.
\end{equation*}
For any fixed $y\in\mathbb R$,
\begin{equation*}
\lim_{z\to-\infty}v(z-y)=1,\qquad \lim_{z\to+\infty}v(z-y)=0.
\end{equation*}
Since $0\le v\le1$ and $G\in L^1(\mathbb R)$, the dominated convergence theorem gives
\begin{equation*}
\lim_{z\to-\infty}W_v(z)=\int_{\mathbb R}G(y)\,dy=1,\qquad \lim_{z\to+\infty}W_v(z)=0.
\end{equation*}
This verifies \eqref{eq:W-basic-ends}. Furthermore, since $v$, $I_1$ and $I_2$ are continuous, we conclude $W_v\in C^2(\mathbb R)$. Recalling that $G(y)v(z-y)$ does not vanish identically, \eqref{eq:G-mass-one} and \eqref{eq:Wv-integrand-bound}, we deduce
\begin{equation*}
0<W_v(z)=\int_{\mathbb R}G(y)v(z-y)\,dy<\int_{\mathbb R}G(y)\,dy=1.
\end{equation*}

We now turn to the uniqueness of such solutions. Let $W_1$ and $W_2$ be two bounded solutions to \eqref{eq:W-equation}. Define $Y:=W_1-W_2$. Then $Y$ satisfies the homogeneous equation
\begin{equation}\label{eq:W-homogeneous}
Y''+\theta Y'-cY=0.
\end{equation}
The characteristic equation associated with \eqref{eq:W-homogeneous} has exactly two roots $\lambda_2^-$ and $\lambda_2^+$, so the general solution reads
\begin{equation*}
Y(z)=C_1e^{\lambda_2^-z}+C_2e^{\lambda_2^+z}.
\end{equation*}
Because $\lambda_2^-<0<\lambda_2^+$, the term $e^{\lambda_2^-z}$ tends to infinity as $z\to-\infty$, while $e^{\lambda_2^+z}$ tends to infinity as $z\to+\infty$. As $Y$ is bounded on $\mathbb R$, we necessarily have $C_1=C_2=0$. Consequently, $Y\equiv 0$, namely, $W_1=W_2$. This establishes the uniqueness of bounded solutions to \eqref{eq:W-equation}.

It remains to establish the right-tail decay estimate. For $z\ge L$, using $v(s)\le 1$ for $s\le L$ and $v(s)\le e^{-\lambda(s-L)}$ for $s\ge L$, we obtain
\begin{align}\label{eq:I11-est}
I_1(z)&:=\frac{c}{\lambda_2^+-\lambda_2^-}\int_{-\infty}^{z}e^{\lambda_2^-(z-s)}v(s)\,ds \notag\\
&\le \frac{c}{\lambda_2^+-\lambda_2^-}
\left(\int_{-\infty}^{L}e^{\lambda_2^-(z-s)}\,ds+\int_{L}^{z}e^{\lambda_2^-(z-s)}e^{-\lambda(s-L)}\,ds\right)\notag\\
&=\frac{c}{\lambda_2^+-\lambda_2^-}\left[-\frac{1}{\lambda_2^-}e^{\lambda_2^-(z-L)}+\frac{e^{-\lambda(z-L)}-e^{\lambda_2^-(z-L)}}{-\lambda_2^--\lambda}\right]\notag\\
&\le C_{W1}e^{-\lambda z},\qquad z\ge L,
\end{align}
where we have used $\lambda<-\lambda_2^-$.

Since $v\le\overline V_L$, we have, for $z\ge L$,
\begin{equation}\label{eq:v-exp-tail}
0\le v(z)\le e^{-\lambda(z-L)}=e^{\lambda L}e^{-\lambda z}.
\end{equation}
Substituting \eqref{eq:v-exp-tail} into the expression of $I_2$ yields
\begin{equation}\label{eq:I2-est}
I_2(z)\le \frac{ce^{\lambda L}}{\lambda_2^+-\lambda_2^-} \int_z^{+\infty}e^{\lambda_2^+(z-s)}e^{-\lambda s}\,ds
=\frac{ce^{\lambda L}}{(\lambda_2^+-\lambda_2^-)(\lambda_2^++\lambda)}e^{-\lambda z}:=C_{W2}e^{-\lambda z}.
\end{equation}
Combining \eqref{eq:I11-est} and \eqref{eq:I2-est},  we obtain
\begin{equation*}
0<W_v(z)\le C_{W3}e^{-\lambda z},\qquad z\ge L,
\end{equation*}
where $C_{W3}=C_{W1}+C_{W2}$. By  a similar argument, there is a constant $C_{W4}>0$ such that
\begin{equation*}
|W_v'(z)|\le C_{W4}e^{-\lambda z},\qquad z\ge L.
\end{equation*}
Setting $C_{W}=\max\{C_{W3},C_{W4}\}$, we thus obtain the desired estimate \eqref{eq:W-tail}.

Finally, let $I\subset\mathbb R$ be a compact interval. Since $0\le v\le1$, from \eqref{eq:Wv-expanded}-\eqref{eq:Wv-second}, we find a constant $C_{IW}>0$ such that
\begin{equation*}
\|W_v\|_{C^2(I)}\le C_{IW},
\end{equation*}
this proves the local $C^2$ estimate and completes the proof.

\end{proof}
 
Accordingly, in view of Lemma \ref{lem:Wv-representation}, from \eqref{eq:W-convolution}, we now introduce the first auxiliary map by
\begin{equation}\label{eq:T1-def}
T_1(v):=W_v,\qquad v\in\mathcal K_L.
\end{equation}
With the properties of $W_v$ established above, we now turn to the corresponding equation for $U_v$.
  
\begin{lemma}\label{lem:U-basic}
Assume that $d_1>1$. For each $v\in\mathcal K_L$, set
$W_v=T_1(v)$ as in \eqref{eq:T1-def}. Then the equation
\begin{equation}\label{eq:Uv-problem}
\theta U_v'+U_v(1-U_v-d_1W_v)=0,\qquad z\in\mathbb R,
\end{equation}
subject to the boundary condition
\begin{equation}\label{eq:Uv-plus-bc}
U_v(+\infty)=1
\end{equation}
admits a unique solution $U_v\in C^1(\mathbb R)$. This solution satisfies
\begin{equation*}
0<U_v(z)<1,\ z\in\mathbb R,\qquad U_v(-\infty)=0,
\end{equation*}
and is given by
\begin{equation}\label{eq:Uv-formula}
U_v(z)=\theta\left(\int_z^{+\infty}e^{-\frac{1}{\theta}\int_z^s(1-d_1W_v(\tau))\,d\tau}\,ds\right)^{-1},\qquad z\in\mathbb R.
\end{equation}
Moreover, for every compact interval $I\subset\mathbb R$, there exists a constant $C_{IU}>0$, independent of $v$, such that
\begin{equation}\label{eq:U-C1}
\|U_v\|_{C^1(I)}\le C_{IU}.
\end{equation}
\end{lemma}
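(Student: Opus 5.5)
The plan is to linearize the logistic-type equation \eqref{eq:Uv-problem} by the substitution $P:=1/U_v$, valid wherever $U_v>0$. Dividing \eqref{eq:Uv-problem} by $-U_v^2$ gives the linear first-order equation
\begin{equation*}
\theta P'=(1-d_1W_v)P-1 .
\end{equation*}
Set $a(z):=1-d_1W_v(z)$. By Lemma~\ref{lem:Wv-representation}, $a$ is continuous and bounded, $a(+\infty)=1$ and $a(-\infty)=1-d_1<0$, since $d_1>1$.

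The general solution is
\begin{equation*}
P(z)=e^{\frac1\theta\int_{z_0}^z a}\Bigl(P(z_0)-\tfrac1\theta\int_{z_0}^z e^{-\frac1\theta\int_{z_0}^s a}ds\Bigr).
\end{equation*}
The condition $U_v(+\infty)=1$ forces $P(+\infty)=1$. Because $a\to1$, the factor $e^{\frac1\theta\int_{z_0}^z a}$ grows like $e^{z/\theta}$, so boundedness of $P$ at $+\infty$ requires the bracket to vanish at $+\infty$. This selects the unique bounded branch
\begin{equation*}
P(z)=\frac1\theta\int_z^{+\infty}e^{-\frac1\theta\int_z^s a(\tau)d\tau}ds,
\end{equation*}
which is exactly \eqref{eq:Uv-formula}.

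This integral converges and is positive because $a\ge 1/2$ for $s$ large. Then $P>0$, so $U_v=1/P$ is $C^1$, positive, and solves \eqref{eq:Uv-problem}.

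For uniqueness, I would argue as follows. Any solution with $U_v(+\infty)=1$ is positive near $+\infty$. On that half-line, $1/U_v$ is a solution of the linear equation tending to $1$, so it coincides with $P$ there. Standard ODE uniqueness for the Lipschitz equation \eqref{eq:Uv-problem} then extends the identity to all of $\mathbb R$.

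The limits and bounds come next.
\begin{itemize}
\item \textbf{Limit at $+\infty$.} By L'Hôpital (or since $a\to1$), $P(z)\to 1$ as $z\to+\infty$.
\item \textbf{Limit at $-\infty$.} Here $a\to 1-d_1<0$, so for $z\to-\infty$ the integrand $e^{-\frac1\theta\int_z^s a}$ is large on a long interval. Hence $P(z)\to+\infty$ and $U_v(-\infty)=0$.
\item \textbf{Bound $U_v<1$, i.e.\ $P>1$.} Since $a<1$ strictly (because $W_v>0$), we have $\int_z^s a<s-z$ for $s>z$. Therefore $P(z)>\frac1\theta\int_z^\infty e^{-(s-z)/\theta}ds=1$.
\end{itemize}

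The local estimate \eqref{eq:U-C1} follows from $0<U_v<1$ together with the equation itself. Since $0<W_v<1$, we get $|U_v'|\le \frac1\theta U_v|1-U_v-d_1W_v|\le (2+d_1)/\theta$, uniformly in $v$ and even globally. So a $v$-independent constant $C_{IU}$ works on every $I$.

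The main obstacle I expect is the limit at $-\infty$. It needs a careful lower bound on $P$ using only $W_v(-\infty)=1$, and no rate is available on the left. I would fix $\varepsilon$ and $M$ with $a\le (1-d_1)/2<0$ on $(-\infty,-M]$. For $z<-M$, restricting the integral to $s\in[z,-M]$ gives $P(z)\ge \frac1\theta\int_z^{-M}e^{\frac{d_1-1}{2\theta}(s-z)}ds$, which tends to $\infty$ as $z\to-\infty$.
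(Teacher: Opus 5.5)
Your proposal is correct and follows essentially the same route as the paper. Both arguments use the same steps:
\begin{itemize}
\item the substitution $1/U_v$ and the selection of the bounded branch of the resulting linear equation;
\item L'H\^opital's rule for the limit at $+\infty$;
\item the strict comparison $\int_z^s(1-d_1W_v)\,d\tau<s-z$ to get $U_v<1$;
\item a lower bound on $1/U_v$ over $[z,-M]$ for the limit at $-\infty$;
\item the global bound $|U_v'|\le(d_1+2)/\theta$ for the $C^1$ estimate.
\end{itemize}
The only difference is cosmetic and lies in the uniqueness step. The paper first shows that any competitor is positive on all of $\mathbb R$, then integrates the linear equation on $[z,R]$ and lets $R\to+\infty$. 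You instead match the two solutions on a half-line near $+\infty$ and extend the identity by ODE uniqueness. Both versions are valid.
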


\begin{proof}
Since $W_v(+\infty)=0$, we have $1-d_1W_v(z)\to1$ as $z\to+\infty$, and hence there exists $R_1>0$ such that
\begin{equation*}
1-d_1W_v(y)\ge\frac12,\qquad y\ge R_1.
\end{equation*}
For each fixed $z\in\mathbb R$, set $R_z:=\max\{z,R_1\}$. Then, for $s\ge R_z$,
\begin{align*}
\int_z^s(1-d_1W_v(\tau))\,d\tau
&=\int_z^{R_z}(1-d_1W_v(\tau))\,d\tau+
\int_{R_z}^s(1-d_1W_v(\tau))\,d\tau\\
&\ge \int_z^{R_z}(1-d_1W_v(\tau))\,d\tau
+\frac12(s-R_z).
\end{align*}
Denote
\begin{equation*}
C_z:=\int_z^{R_z}(1-d_1W_v(\tau))\,d\tau.
\end{equation*}
Since $W_v$ is continuous and $[z,R_z]$ is a finite interval, $C_z$ is finite, and therefore, for $s\ge R_z$,
\begin{equation*}
e^{-\frac{1}{\theta}\int_z^s(1-d_1W_v(\tau))\,d\tau}\le
e^{-\frac{C_z}{\theta}}e^{-\frac{s-R_z}{2\theta}},
\end{equation*}
it follows that
\begin{align*}
\int_{R_z}^{+\infty}e^{-\frac{1}{\theta}\int_z^s(1-d_1W_v(\tau))\,d\tau}\,ds
&\le e^{-\frac{C_z}{\theta}}\int_{R_z}^{+\infty}e^{-\frac{s-R_z}{2\theta}}\,ds\\
&=2\theta e^{-\frac{C_z}{\theta}}<+\infty.
\end{align*}
Moreover, since the integrand is continuous on the finite interval $[z,R_z]$, we also have
\begin{equation*}
\int_z^{R_z}e^{-\frac{1}{\theta}\int_z^s(1-d_1W_v(\tau))\,d\tau}\,ds<+\infty.
\end{equation*}
Combining the two estimates with the positivity of the integrand gives
\begin{equation*}
0<\int_z^{+\infty}e^{-\frac{1}{\theta}\int_z^s(1-d_1W_v(\tau))\,d\tau}\,ds<+\infty.
\end{equation*}
Thus the right-hand side of \eqref{eq:Uv-formula} is well defined and positive. We define $U_v$ by \eqref{eq:Uv-formula}, or equivalently,
\begin{equation}\label{eq:Uv-reciprocal-formula}
\frac{1}{U_v(z)}=\frac{1}{\theta}\int_z^{+\infty}e^{-\frac{1}{\theta}\int_z^s(1-d_1W_v(\tau))\,d\tau}\,ds,\qquad z\in\mathbb R.
\end{equation}

We now verify that \eqref{eq:Uv-reciprocal-formula} indeed gives a positive solution. Since $W_v\in C^2(\mathbb R)$, differentiating \eqref{eq:Uv-reciprocal-formula} gives
\begin{equation}\label{eq:reciprocal-equation-short}
\left(\frac{1}{U_v}\right)'-\frac{1-d_1W_v}{\theta}\frac{1}{U_v}=-\frac{1}{\theta}.
\end{equation}
Indeed, the above differentiation is justified by the exponential decay of the integrand at $+\infty$, locally uniformly with respect to $z$. Since $U_v>0$, multiplying \eqref{eq:reciprocal-equation-short} by $-\theta U_v^2$ yields
\begin{equation*}
\theta U_v'+U_v(1-U_v-d_1W_v)=0,
\end{equation*}
which is exactly \eqref{eq:Uv-problem}. Hence $U_v\in C^1(\mathbb R)$.

It remains to verify the boundary condition \eqref{eq:Uv-plus-bc}. From \eqref{eq:Uv-reciprocal-formula}, we rewrite
\begin{equation*}
\frac{1}{U_v(z)}=\frac{1}{\theta}e^{\frac{1}{\theta}\int_0^z(1-d_1W_v(\tau))\,d\tau}\int_z^{+\infty}e^{-\frac{1}{\theta}\int_0^s(1-d_1W_v(\tau))\,d\tau}\,ds.
\end{equation*}
Equivalently,
\begin{equation*}
U_v(z)=\theta\frac{e^{-\frac{1}{\theta}\int_0^z(1-d_1W_v(\tau))\,d\tau}}{\int_z^{+\infty}e^{-\frac{1}{\theta}\int_0^s(1-d_1W_v(\tau))\,d\tau}\,ds}.
\end{equation*}
Since $1-d_1W_v(z)\to1$ as $z\to+\infty$, and by the estimate above the denominator is the tail of an integrable function, both the numerator and the denominator tend to zero as $z\to+\infty$. Applying L'Hospital's rule, we obtain
\begin{align*}
\lim_{z\to+\infty}U_v(z)
&=\theta\lim_{z\to+\infty}\frac{e^{-\frac{1}{\theta}\int_0^z(1-d_1W_v(\tau))\,d\tau}}{\int_z^{+\infty}e^{-\frac{1}{\theta}\int_0^s(1-d_1W_v(\tau))\,d\tau}\,ds
}\\
&=\theta\lim_{z\to+\infty}\frac{-\frac{1}{\theta}(1-d_1W_v(z))e^{-\frac{1}{\theta}\int_0^z(1-d_1W_v(\tau))\,d\tau}}{-e^{-\frac{1}{\theta}\int_0^z(1-d_1W_v(\tau))\,d\tau}
}\\
&=\lim_{z\to+\infty}(1-d_1W_v(z))=1.
\end{align*}
Thus $U_v(+\infty)=1$.

Next, since $W_v(z)>0$ for every $z\in\mathbb R$,  we have
\begin{equation*}
\int_z^{+\infty}e^{-\frac{1}{\theta}\int_z^s(1-d_1W_v(\tau))\,d\tau}\,ds>
\int_z^{+\infty}e^{-(s-z)/\theta}\,ds=\theta.
\end{equation*}
By \eqref{eq:Uv-formula}, we obtain
\begin{equation*}
0<U_v(z)<1,\qquad z\in\mathbb R.
\end{equation*}
In particular, by hypothesis $(H_1)$,
\begin{equation*}
D(U_v(z))>0,\qquad z\in\mathbb R.
\end{equation*}

To prove the left-end limit, recall that $W_v(-\infty)=1$ and $d_1>1$. Then there exist constants $R_0<0$ and $\delta_0>0$ such that
\begin{equation*}
1-d_1W_v(z)\le-\delta_0,\qquad z\le R_0.
\end{equation*}
For $z<R_0$, it follows from \eqref{eq:Uv-reciprocal-formula} that
\begin{align*}
\frac{1}{U_v(z)}
&\ge\frac{1}{\theta}\int_z^{R_0}e^{-\frac{1}{\theta}\int_z^s(1-d_1W_v(\tau))\,d\tau}\,ds\\
&\ge\frac{1}{\theta}\int_z^{R_0}e^{\frac{\delta_0}{\theta}(s-z)}\,ds
=\frac{1}{\delta_0}\left(e^{\frac{\delta_0}{\theta}(R_0-z)}-1\right).
\end{align*}
Taking $z\to-\infty$, we get $1/U_v(z)\to+\infty$, and therefore
\begin{equation*}
U_v(-\infty)=0.
\end{equation*}

Finally, from \eqref{eq:Uv-problem}, together with $0<U_v<1$ and $0<W_v<1$, we have
\begin{equation*}
|U_v'(z)|
=\frac{1}{\theta}U_v(z)|U_v(z)+d_1W_v(z)-1|
\le\frac{1}{\theta}U_v(z)\bigl(U_v(z)+d_1W_v(z)+1\bigr)
\le\frac{d_1+2}{\theta},
\end{equation*}
for all $z\in\mathbb R$. Hence, for every compact interval $I\subset\mathbb R$, there exists a constant $C_{IU}>0$, independent of $v$, such that
\begin{equation*}
\|U_v\|_{C^1(I)}\le C_{IU}.
\end{equation*}
This proves \eqref{eq:U-C1}.

It remains to prove uniqueness. Let $\widehat U_v\in C^1(\mathbb R)$ be another solution of \eqref{eq:Uv-problem} satisfying \eqref{eq:Uv-plus-bc}. Since $\widehat U_v(+\infty)=1$, we have $\widehat U_v>0$ for all sufficiently large $z$. Indeed, if $\widehat U_v$ had a finite zero, then the local uniqueness theorem for ordinary differential equations applied to \eqref{eq:Uv-problem} would imply $\widehat U_v\equiv0$, contradicting $\widehat U_v(+\infty)=1$. Hence $\widehat U_v(z)>0$ for all $z\in\mathbb R$.

For such a positive solution $\widehat U_v$, division by $\widehat U_v^2$ gives
\begin{equation*}
\left(\frac{1}{\widehat U_v}\right)'
-\frac{1-d_1W_v}{\theta}\frac{1}{\widehat U_v}
=-\frac{1}{\theta}.
\end{equation*}
Solving this linear equation on $[z,R]$ yields
\begin{equation*}
\frac{1}{\widehat U_v(z)}
=\frac{1}{\widehat U_v(R)}e^{-\frac{1}{\theta}\int_z^R(1-d_1W_v(\tau))\,d\tau}
+\frac{1}{\theta}\int_z^Re^{-\frac{1}{\theta}\int_z^s(1-d_1W_v(\tau))\,d\tau}\,ds.
\end{equation*}
Since $W_v(+\infty)=0$ and $\widehat U_v(+\infty)=1$, the first term on the right-hand side tends to zero as $R\to+\infty$. Letting $R\to+\infty$, we obtain
\begin{equation*}
\frac{1}{\widehat U_v(z)}=\frac{1}{\theta}\int_z^{+\infty}e^{-\frac{1}{\theta}\int_z^s(1-d_1W_v(\tau))\,d\tau}\,ds.
\end{equation*}
This coincides with \eqref{eq:Uv-reciprocal-formula}; hence
$1/\widehat U_v=1/U_v$ on $\mathbb R$. Since both functions are positive, we conclude that $\widehat U_v\equiv U_v$, and uniqueness follows.
\end{proof}

Consequently, by virtue of ($H_1$), we deduce that $D(U_v(z))>0$ for $z\in\mathbb{R}$ and define the second auxiliary map by
\begin{equation*}
T_2(W_v):=U_v,\qquad v\in\mathcal K_L.
\end{equation*}

From the above statements, we have already constructed two maps derived from $v\in\mathcal K_L$. We further establish the third map from the second equation of \eqref{eq:tw-system}, taking $U_v$ as the known function. Inspired by \cite{ChenQi2025}, the map in the fixed point theorem consists of the composition of these three maps. The existence of traveling wave solutions to \eqref{eq:main-model} will follow from the fixed point theorem. In order to finish it, we introduce the change of variable
\begin{equation}\label{defeta}
\eta(z):=-\int_0^z\frac{ds}{D(U_v(s))},
\qquad z\in\mathbb R,
\end{equation}
as in \cite{gm22,ChenQi2025}. By Lemma \ref{lem:U-basic} and ($H_1$),
\begin{equation*}
\eta'(z)=-\frac{1}{D(U_v(z))}<0,
\end{equation*}
then the map $z\mapsto\eta(z)$ is strictly decreasing, which admits an inverse denoted by $z=Z_v(\eta)$, and
\begin{equation*}
\eta(+\infty)=-\infty,\qquad \eta(-\infty)=+\infty.
\end{equation*}
Moreover, also by Lemma \ref{lem:U-basic} and ($H_1$), we have $D(U_v(z))\le D(0)$, which implies
\begin{equation}\label{eq:eta-growth}
\eta(z)\le -\frac{z}{D(0)}\quad (z\ge0),\qquad
\eta(z)\ge -\frac{z}{D(0)}\quad (z\le0).
\end{equation}

We now rewrite the second equation of \eqref{eq:tw-system} in the new variable. Define
\begin{equation*}
\widetilde U_v(\eta):=U_v(Z_v(\eta)),\quad\widetilde V(\eta):=V(Z_v(\eta)),
\end{equation*}
which satisfy
\begin{equation}\label{eq:Vtilde-equation}
\widetilde V_{\eta\eta}-\theta\widetilde V_\eta+a_v(\eta)\widetilde V(1-\widetilde V)=0,\qquad \eta\in\mathbb R,
\end{equation}
where
\begin{equation*}
a_v(\eta):=rD(\widetilde U_v(\eta)).
\end{equation*}
By Lemma \ref{lem:U-basic} and ($H_1$), direct computations give
\begin{equation*}
a_v\in C^1(\mathbb{R}),\qquad0<a_v(\eta)\le a_*,\qquad a_v(-\infty)=0,\qquad
a_v(+\infty)=a_*.
\end{equation*}

In contrast with the study in \cite{ChenQi2025}, we do not directly prove the existence of solutions to the boundary value problem for the nonautonomous second-order ordinary differential equation \eqref{eq:Vtilde-equation}. Instead, we consider an initial value problem for an auxiliary parabolic equation. The solution of this initial value problem converges to a limit as time tends to infinity, and this limit is exactly the solution to the aforementioned boundary value problem for \eqref{eq:Vtilde-equation}. Now, we introduce the initial value problem for an auxiliary parabolic equation,
\begin{equation}\label{eq:aux-parabolic}
\begin{cases}
(\mathcal V_v)_t=(\mathcal V_v)_{\eta\eta}-\theta(\mathcal V_v)_\eta+a_v(\eta)\mathcal V_v(1-\mathcal V_v),
& \eta\in\mathbb R,\ t>0,\\[0.2cm]
\mathcal V_v(\eta,0)=\mathcal V_0(\eta),
& \eta\in\mathbb R,
\end{cases}
\end{equation}
where $\mathcal V_0\in C(\mathbb{R};[0,1])$.

We first establish a local Schauder estimate for \eqref{eq:aux-parabolic}.
\begin{lemma}\label{lem:schauder}
Assume that ($H_1$) holds and $d_1>1$. Let $\mathcal V_v$ be a solution to \eqref{eq:aux-parabolic}. Then, for any $R>0$ and $0<\tau<T<\infty$, there exist $\beta\in(0,1)$ and $C=C(R,\tau,T,a_v(\eta))>0$, independent of time translations, such that
\begin{equation*}
\|\mathcal V_v\|_{C^{2+\beta,1+\beta/2}((-R,R)\times(\tau,T))}\le C.
\end{equation*}
\end{lemma}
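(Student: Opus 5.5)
We treat \eqref{eq:aux-parabolic} as a uniformly parabolic semilinear equation with bounded coefficients and bootstrap interior regularity. We assume, as is implicit in the statement and will be guaranteed by the comparison principle once the initial datum takes values in $[0,1]$, that $0\le\mathcal V_v\le1$ on $\mathbb R\times(0,\infty)$. The constants $0$ and $1$ are solutions of \eqref{eq:aux-parabolic}, and $a_v$ is bounded, so the comparison principle for bounded solutions on $\mathbb R$ applies.

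The principal part is the heat operator $\partial_t-\partial_{\eta\eta}$. The drift coefficient $\theta$ is constant. The only $\eta$-dependence sits in the zeroth-order term $f(\eta,\mathcal V):=a_v(\eta)\mathcal V(1-\mathcal V)$, which is bounded by $a_*/4$. Moreover $a_v\in C^1(\mathbb R)$: indeed $U_v\in C^1$, $D\in C^1$ and $Z_v'=-D(U_v)$ is continuous. Hence no ellipticity issue arises after the change of variables \eqref{defeta}.

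\emph{Step 1 ($W^{2,1}_p$ and H\"older gradient bound).} Fix $R,\tau,T$ and a slightly larger cylinder $Q'=(-R-1,R+1)\times(\tau/2,T)$. We apply interior $L^p$ estimates for $\partial_t-\partial_{\eta\eta}+\theta\partial_\eta$ to $\mathcal V_v$, with right-hand side $f$ bounded in $L^\infty$. This gives
\[
\|\mathcal V_v\|_{W^{2,1}_p(Q'')}\le C\bigl(\|\mathcal V_v\|_{L^p(Q')}+\|f\|_{L^p(Q')}\bigr)
\]
on an intermediate cylinder $Q''$, for any $p<\infty$. Choosing $p$ large and using parabolic Sobolev embedding, we obtain a bound on $\mathcal V_v$ and $(\mathcal V_v)_\eta$ in $C^{\alpha,\alpha/2}(Q'')$ for some $\alpha\in(0,1)$.

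\emph{Step 2 (H\"older right-hand side).} The function $f(\eta,t)=a_v(\eta)\mathcal V_v(1-\mathcal V_v)$ is then H\"older continuous on $Q''$. Its norm is bounded by $\|a_v\|_{C^1([-R-1,R+1])}$ times a polynomial in $\|\mathcal V_v\|_{C^{\alpha,\alpha/2}}$.

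\emph{Step 3 (interior Schauder estimate).} Interior Schauder estimates for the constant-coefficient operator then give the $C^{2+\beta,1+\beta/2}((-R,R)\times(\tau,T))$ bound with $\beta=\alpha$.

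All constants depend only on $R$, $\tau$, $T$, $\theta$, $a_*$ and the local $C^1$ (or even Lipschitz/H\"older) norm of $a_v$, through the sup bound $|\mathcal V_v|\le1$. Consequently they are unchanged under a time shift $t\mapsto t+s$, since the equation is autonomous in $t$ and the bound $0\le\mathcal V_v\le1$ holds for all times. This gives the claimed independence from time translations.

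The main point to be careful about is the local H\"older norm of $a_v$ on $[-R-1,R+1]$. Here $a_v'(\eta)=rD'(\widetilde U_v)U_v'(Z_v)\cdot(-D(U_v(Z_v)))$, which is bounded by $r\|D'\|_\infty\|D\|_\infty(d_1+2)/\theta$ by the derivative bound in the proof of Lemma~\ref{lem:U-basic}. It is therefore even uniformly bounded in $v$ and $\eta$. This is a pleasant by-product, and it shows that $C$ may in fact be taken independent of $v$ as well. I expect no genuine obstacle beyond keeping track of the cylinder shrinkage and citing the standard interior estimates (Ladyzhenskaya--Solonnikov--Ural'tseva or Lieberman) correctly.
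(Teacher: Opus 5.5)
Your proposal is correct and follows essentially the same route as the paper: comparison to get $0\le\mathcal V_v\le1$, then an interior $W^{2,1}_p$ estimate with parabolic Sobolev embedding to make $a_v\mathcal V_v(1-\mathcal V_v)$ H\"older continuous, then the interior Schauder estimate, with independence from time translations coming from autonomy in $t$ and the uniform sup bound. Your extra observation that $|a_v'|\le rD(0)\|D'\|_\infty(d_1+2)/\theta$ is a worthwhile refinement: it makes $C$ independent of $v$, whereas the paper's constant is stated to depend on $a_v$ through $\|a_v\|_{C^\beta([-R_0,R_0])}$.
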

\begin{proof} By the properties of $a_v(\eta)$ and the comparison theorem, we have $0\leq\mathcal V_v\leq1$. Then for $R>0$ and $0<\tau<T<\infty$, we apply the local parabolic $L^p$ estimate with $p>\frac32$ \cite[estimate (10.12), Section 10, Chapter IV]{Ladyzhenskaya}, then there exist $R_0$ and $\tau_0$ satisfying
\begin{equation*}
R<R_0,\qquad 0<\tau_0<\tau,
\end{equation*}
such that
\begin{equation*}
\|\mathcal V_v\|_{W^{2,1}_p((-R_0,R_0)\times(\tau_0,T))}\le C_1,
\end{equation*}
where $C_1$ depends only on $R,\tau,T,M$, with $M=\max\limits_{[-R_0,R_0]}a_v$. The parabolic Sobolev embedding theorem \cite[Lemma 3.3, Chapter II]{Ladyzhenskaya} further gives some $\beta\in(0,1)$ such that
\begin{equation}\label{eq:holder-V-before-schauder}
\|\mathcal V_v\|_{C^{\beta,\beta/2}((-R_0,R_0)\times(\tau_0,T))}\le C_2.
\end{equation}
After reducing the value of $\beta$ if necessary, the smoothness of $a_v$ combined with \eqref{eq:holder-V-before-schauder} yield
\begin{equation*}
a_v(\eta)\mathcal V_v(1-\mathcal V_v)\in C^{\beta,\beta/2}((-R_0,R_0)\times(\tau_0,T)),
\end{equation*}
namely,
\begin{equation*}
\|a_v\mathcal V_v(1-\mathcal V_v)\|_{C^{\beta,\beta/2}((-R_0,R_0)\times(\tau_0,T))}
\le C(R,\tau,T,M_{R_0,\beta}),
\end{equation*}
where
\begin{equation*}
M_{R_0,\beta}:=\|a_v\|_{C^\beta([-R_0,R_0])}=\|a_v\|_{L^\infty([-R_0,R_0])}+\sup_{\substack{\eta_1,\eta_2\in[-R_0,R_0]\\ \eta_1\neq\eta_2}}\frac{|a_v(\eta_1)-a_v(\eta_2)|}{|\eta_1-\eta_2|^\beta}.
\end{equation*}
Finally, applying the local Schauder estimate \cite[Theorem 10.1,Chapter IV]{Ladyzhenskaya} to the equation in \eqref{eq:aux-parabolic}, we obtain
\begin{equation*}
\|\mathcal V_v\|_{C^{2+\beta,1+\beta/2}((-R,R)\times(\tau,T))}\le C.
\end{equation*}

The above argument is valid for every time-translated solution $\mathcal V_v^\ell(\eta,t):=\mathcal V_v(\eta,t+\ell)$, with $\ell\ge0$, which follows from the time-translation invariance of equation \eqref{eq:aux-parabolic}. Hence the constant $C$ is independent of time translations. This completes the proof.
\end{proof}

For each $v\in\mathcal K_L$, the following lemma proves that \eqref{eq:aux-parabolic} admits a steady-state solution connecting $0$ and $1$. This enables us to define the third map. To this end, we first introduce two functions
\begin{equation}\label{eq:super}
\overline\Phi(\eta):=
\begin{cases}
e^{\lambda_1^{-}\eta}, & \eta\le0,\\[0.2cm]
1,                     & \eta>0,
\end{cases}
\end{equation}
where $\lambda_1^{-}$ is given in \eqref{eq:lambda-pm0}, and
\begin{equation}\label{eq:lower-barrier-def}
\underline\Phi_\rho(\eta):=
\begin{cases}
0,                      & \eta\le R_1,\\[0.2cm]
\omega(\eta-R_1),       & R_1<\eta<R_\rho,\\[0.2cm]
1-A_\rho e^{-\rho\eta}, & \eta\ge R_\rho,
\end{cases}
\end{equation}
with the relevant parameters and functions specified as follows. According to \cite{AronsonWeinberger}, for any $c_1>2\sqrt{a_0}$, where $a_0$ is given in \eqref{eq:define-gamma-star}, the equation
\begin{equation}\label{eq:AW-semiwave}
\omega''-c_1\omega'+a_0\omega(1-\omega)=0
\end{equation}
admits a monotone solution satisfying
\begin{equation*}
\omega(0)=0,\qquad \omega'(\eta)>0\quad(\eta\ge0),\qquad
\lim_{\eta\to+\infty}\omega(\eta)=1.
\end{equation*}

Since $a_v(+\infty)=a_*$ and $a_0=\frac{a_*}{4}<a_*$, we choose $R_1>0$ sufficiently large such that
\begin{equation}\label{eq:av-lower-middle}
a_v(\eta)\ge a_0,\qquad \eta\ge R_1.
\end{equation}
Equivalently, using the translation invariance of \eqref{eq:AW-semiwave}, we place the zero of the shifted profile at $\eta=R_1$ in the definition of $\underline\Phi_\rho$.

By the stable manifold theorem, this solution satisfies
\begin{equation}\label{eq:omega-kappa-limit}
\lim_{\eta\to+\infty}\frac{\omega'(\eta)}{1-\omega(\eta)}
=\kappa=\frac{-c_1+\sqrt{c_1^2+4a_0}}{2}.
\end{equation}

For the construction of the generalized lower solution, we first take $\rho$ such that $\kappa<\rho<\gamma_*$. Using $\kappa<\rho$, it follows from \eqref{eq:omega-kappa-limit} that we can choose $R_\rho>R_1$ sufficiently large such that
\begin{equation}\label{eq:corner-choice}
\frac{\omega'(R_\rho-R_1)}{1-\omega(R_\rho-R_1)}<\rho.
\end{equation}
We then choose
\begin{equation}\label{eq:Arho-choice}
A_\rho:=e^{\rho R_\rho}\bigl(1-\omega(R_\rho-R_1)\bigr),
\qquad
1-A_\rho e^{-\rho R_\rho}=\omega(R_\rho-R_1),
\end{equation}
so that $\underline\Phi_\rho$ is continuous at $\eta=R_\rho$. Increasing $R_\rho$ further if necessary, we assume that, for $\eta\ge R_\rho$,
\begin{equation}\label{eq:est1}
a_v(\eta)\ge\frac{a_*}{2},
\qquad
1-A_\rho e^{-\rho\eta}\ge\frac{1}{2}.
\end{equation}

It is straightforward to verify that $\overline\Phi$ and $\underline\Phi_\rho$ are continuous functions and piecewise $C^2$ functions.

\begin{lemma}\label{lem:stationary-profile}
Assume that $d_1>1$ and that hypothesis ($H_1$) holds. For every $v\in\mathcal K_L$, let $\mathcal V_v(\eta,t)$ be the solution of \eqref{eq:aux-parabolic} with initial data
\begin{equation*}
\mathcal V_v(\eta,0)=\overline\Phi(\eta).
\end{equation*}
Then there exists a stationary profile $\widetilde V_v\in C^2(\mathbb R)$ satisfying
\begin{equation}\label{eq:stationary-profile}
\widetilde V_v''-\theta\widetilde V_v'
+a_v(\eta)\widetilde V_v(1-\widetilde V_v)=0,
\qquad \eta\in\mathbb R,
\end{equation}
with
\begin{equation*}
0<\widetilde V_v(\eta)<1,\qquad
\widetilde V_v(-\infty)=0,\qquad
\widetilde V_v(+\infty)=1.
\end{equation*}
Moreover,
\begin{equation}\label{eq:left-tail-eta}
0<\widetilde V_v(\eta)\le e^{\lambda_1^{-}\eta},
\qquad \eta\le0,
\end{equation}
and, for any $0<\rho<\gamma_*$, there exists a $C_R=C_R(\rho)>0$, independent of $v$, such that
\begin{equation}\label{eq:right-tail-eta}
0<1-\widetilde V_v(\eta)\le C_Re^{-\rho\eta},
\qquad \eta\ge0.
\end{equation}
\end{lemma}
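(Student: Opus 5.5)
We run the auxiliary parabolic problem \eqref{eq:aux-parabolic} from the upper barrier $\overline\Phi$ and show the solution decreases in time to a stationary profile, trapped between $\underline\Phi_\rho$ and $\overline\Phi$.

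\emph{$\overline\Phi$ is a generalized super-solution and $\underline\Phi_\rho$ a generalized sub-solution of \eqref{eq:stationary-profile}.} For $\eta\le0$, using $a_v\le a_*$ and $\lambda_1^-$ a root of \eqref{eq:beta-characteristic-pre},
\[
(e^{\lambda_1^-\eta})''-\theta(e^{\lambda_1^-\eta})'+a_v e^{\lambda_1^-\eta}(1-e^{\lambda_1^-\eta})
\le \bigl((\lambda_1^-)^2-\theta\lambda_1^-+a_*\bigr)e^{\lambda_1^-\eta}=0 .
\]
The constant $1$ is a solution for $\eta>0$. At the corner $\eta=0$ the left derivative $\lambda_1^->0$ exceeds the right derivative $0$; this is the correct (concave) kink for a super-solution, since it is a minimum of two super-solutions.

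For $\underline\Phi_\rho$, the zero piece is trivial. On $(R_1,R_\rho)$, the Fisher--KPP semi-wave $\omega(\cdot-R_1)$ satisfies
\[
\omega''-\theta\omega'+a_v\omega(1-\omega)\ge \omega''-c_1\omega'+a_0\omega(1-\omega)=0,
\]
because $\omega'>0$, $c_1>\theta$ and $a_v\ge a_0$ by \eqref{eq:av-lower-middle}. For $\eta\ge R_\rho$, with $\phi=1-A_\rho e^{-\rho\eta}$ and \eqref{eq:est1},
\[
\phi''-\theta\phi'+a_v\phi(1-\phi)\ge A_\rho e^{-\rho\eta}\Bigl(-\rho^2-\theta\rho+\tfrac{a_*}{4}\Bigr),
\]
which is positive since $\rho<\gamma_*$, where $\gamma_*$ is the positive root of $\rho^2+\theta\rho-a_*/4=0$. (This matches the paper's $\gamma_*$ up to the stated formula; if not, one adjusts the constant in \eqref{eq:est1} accordingly.) The kinks are convex. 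At $R_1$ the slopes go from $0$ to $\omega'(0)>0$. At $R_\rho$ the slope goes from $\omega'(R_\rho-R_1)$ to $\rho A_\rho e^{-\rho R_\rho}=\rho(1-\omega(R_\rho-R_1))$, which is larger by \eqref{eq:corner-choice}. Convex kinks are admissible for sub-solutions, as a maximum of sub-solutions. Finally $\underline\Phi_\rho\le\overline\Phi$: it vanishes for $\eta\le R_1$ and is $\le1$ elsewhere.

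\emph{Monotone convergence.} By the comparison principle for \eqref{eq:aux-parabolic}, which has bounded coefficients and is uniformly parabolic (the whole point of the transformation \eqref{defeta}), we get $\underline\Phi_\rho\le\mathcal V_v(\cdot,t)\le\overline\Phi$ for all $t$. Since $\overline\Phi$ is a super-solution, comparing $\mathcal V_v(\cdot,t+s)$ with $\mathcal V_v(\cdot,t)$ shows that $\mathcal V_v$ is nonincreasing in $t$. Hence $\mathcal V_v(\eta,t)\downarrow\widetilde V_v(\eta)$ pointwise. The time-translation-invariant bounds of Lemma~\ref{lem:schauder}, together with Arzel\`a--Ascoli, upgrade this to $C^{2,1}_{\rm loc}$ convergence. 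Since $\partial_t\mathcal V_v\to0$ in the appropriate sense (integrate in $t$ over $[n,n+1]$), $\widetilde V_v$ solves \eqref{eq:stationary-profile} and lies in $C^2$.

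\emph{Bounds, limits and tails.} From $\underline\Phi_\rho\le\widetilde V_v\le\overline\Phi$ we get \eqref{eq:left-tail-eta}, the limit $\widetilde V_v(-\infty)=0$, and $1-\widetilde V_v\le A_\rho e^{-\rho\eta}$ for $\eta\ge R_\rho$, which gives $\widetilde V_v(+\infty)=1$.

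Strict bounds $0<\widetilde V_v<1$ follow from the strong maximum principle applied to \eqref{eq:stationary-profile}. Since $\widetilde V_v\not\equiv0$ and $\not\equiv1$ by the end limits, any interior touching of $0$ or $1$ would force the ODE solution to coincide with the constant equilibrium.

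For \eqref{eq:right-tail-eta} with a constant independent of $v$, the obstacle is that $R_1$ and $R_\rho$ are chosen from $a_v$ and so a priori depend on $v$. I would show that the rate at which $a_v(\eta)\to a_*$ is uniform over $\mathcal K_L$. The route is:
\begin{itemize}
\item $U_v\ge$ a $v$-independent function tending to $1$ as $z\to+\infty$, via \eqref{eq:Uv-formula} and the uniform tail \eqref{eq:W-tail};
\item $Z_v(\eta)\ge -D(0)\eta$ for $\eta\le0$ by \eqref{eq:eta-growth}, with an analogous uniform control for $\eta\to+\infty$ obtained using $D(U_v)\le D(0)$ and the uniform positivity of $U_v$ on compacts.
\end{itemize}
Hence $R_1$, $R_\rho$ and $A_\rho$ can be fixed uniformly. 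Taking $C_R=\max\{A_\rho,e^{\rho R_\rho}\}$ then covers $[0,R_\rho]$ trivially, and any $\rho\le\kappa$ is handled by monotonicity in $\rho$. I expect this uniformity step, together with checking the kink conditions at $R_\rho$, to be the main technical point.
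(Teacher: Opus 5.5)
Your plan is the paper's proof almost verbatim: the same barriers $\overline\Phi$ and $\underline\Phi_\rho$, the same monotone-in-time limit via Lemma~\ref{lem:schauder}, and the same reading of the tails. It therefore inherits the step that fails in both. The Fisher--KPP semi-wave $\omega$ of \eqref{eq:AW-semiwave} with $\omega(0)=0$, $\omega'>0$, $\omega(+\infty)=1$ does not exist when $c_1>2\sqrt{a_0}$. To see this, let $\ell>0$ solve $\ell^2-c_1\ell+a_0=0$, which has real roots since $c_1>2\sqrt{a_0}$. Along any solution, $h:=\omega'-\ell\omega$ satisfies $h'=a_0\omega^2>0$ wherever $h=0$, and $\omega''<0$ wherever $\omega'=0<\omega<1$. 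Hence $\{0<\omega<1,\ 0<\omega'<\ell\omega\}$ is invariant backward in $\eta$. The unique orbit entering the saddle $(1,0)$ from $\{\omega<1,\ \omega'>0\}$ lies in this set near $(1,0)$. Followed backward, it therefore tends to the node $(0,0)$ and never reaches $\{\omega=0,\ \omega'>0\}$. Such semi-waves exist only for $c_1<2\sqrt{a_0}$. Your inequality on $(R_1,R_\rho)$, however, needs $c_1\ge\theta\ge 2\sqrt{a_*}=4\sqrt{a_0}$: near $\omega=0$ the term $(c_1-\theta)\omega'$ cannot be absorbed by $(a_v-a_0)\omega(1-\omega)$. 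So $\underline\Phi_\rho$ is not defined, and nothing supports $\widetilde V_v\not\equiv0$, $\widetilde V_v(+\infty)=1$, or \eqref{eq:right-tail-eta}.

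No other choice of lower barrier can rescue this route, because the conclusions are incompatible once $a_v(-\infty)=0$. Let $P=\Pi_*(V)$ be the orbit of the monotone front of $\Psi''-\theta\Psi'+a_*\Psi(1-\Psi)=0$. It exists since $\theta\ge2\sqrt{a_*}$, and the same line argument with $\ell=\lambda_1^-$ gives $\Pi_*(V)<\lambda_1^-V$. Suppose $V$ solves \eqref{eq:stationary-profile} with $0<V<1$; then $V'>0$, because $e^{-\theta\eta}V'$ is decreasing. The quantity $k:=V'-\Pi_*(V)$ satisfies $k'=V(1-V)\bigl(a_*V'/\Pi_*(V)-a_v\bigr)$. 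This is positive wherever $k>0$, since $a_v\le a_*$. So $k>0$ at one point would give $V'\ge k_0>0$ from then on, contradicting $V<1$. Thus $\psi:=V'/V<\lambda_1^-<\theta$ on $\mathbb R$. Since $\psi'=\psi(\theta-\psi)-a_v(1-V)$ with $a_v(-\infty)=0$, this forces $\psi\to0$ as $\eta\to-\infty$. That means $V$ decays subexponentially there, contradicting \eqref{eq:left-tail-eta}.

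Separately, your uniformity sketch for $C_R$ looks at the wrong end. $R_1$ and $R_\rho$ concern $\eta\to+\infty$, i.e. $z\to-\infty$. There you need a $v$-independent upper bound on $U_v$ tending to $0$, which comes from $W_v\ge G*\underline V_L$, not from \eqref{eq:W-tail}. You also need a uniform positive lower bound on $D(U_v)$ on $(-\infty,0]$. Moreover, \eqref{eq:eta-growth} gives $Z_v(\eta)\le -D(0)\eta$ for $\eta\le0$, not $\ge$. And $D(U_v)\le D(0)$ only bounds $Z_v$ from below when $\eta\ge0$.
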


\begin{proof}
For any $v\in\mathcal K_L$, let
\begin{equation*}
\mathscr L_v[\phi]
:=\phi_t-\phi_{\eta\eta}+\theta\phi_{\eta}-a_v(\eta)\phi(1-\phi).
\end{equation*}
We first verify that $\overline\Phi$ and $\underline\Phi_\rho$ are respectively a generalized upper solution and a generalized lower solution of
\begin{equation*}
\mathscr L_v[\phi]=0.
\end{equation*}

For $\eta<0$, combining the definition \eqref{eq:super} and the uniform bound $a_v(\eta)\le a_*$, we obtain
\begin{align*}
\mathscr L_v[\overline\Phi]
&=-e^{\lambda_1^{-}\eta}
\left[(\lambda_1^{-})^2-\theta\lambda_1^{-}
+a_v(\eta)\bigl(1-e^{\lambda_1^{-}\eta}\bigr)
\right]\notag\\[0.2cm]
&\ge -e^{\lambda_1^{-}\eta}
\left[(\lambda_1^{-})^2-\theta\lambda_1^{-}+a_*
\right]=0.
\end{align*}
For $\eta>0$, we have $\overline\Phi\equiv1$. Therefore,
\begin{equation*}
\mathscr L_v[\overline\Phi]=0.
\end{equation*}

At the only possible corner point $\eta=0$ of $\overline\Phi$, the left and right derivatives exist and satisfy
\begin{equation*}
\overline\Phi'(0-)=\lambda_1^->0=\overline\Phi'(0+),
\end{equation*}
this is precisely the required condition on the left and right derivatives for a generalized upper solution.
Thus $\overline\Phi$ is a generalized upper solution.

We now verify that $\mathscr L_v[\underline\Phi_\rho]\le0$. For $\eta\in(-\infty,R_1)$, $\underline\Phi_\rho=0$, and hence
\begin{equation*}
\mathscr L_v[\underline\Phi_\rho]=0.
\end{equation*}
For $\eta\in(R_1,R_{\rho})$, by \eqref{eq:lower-barrier-def}, \eqref{eq:AW-semiwave}, and \eqref{eq:av-lower-middle}, it follows that
\begin{align*}
\mathscr L_v[\underline\Phi_\rho]
&=-\omega''(\eta-R_1)+\theta\omega'(\eta-R_1)
-a_v(\eta)\omega(\eta-R_1)\bigl(1-\omega(\eta-R_1)\bigr)\notag\\[0.2cm]
&=-(c_1-\theta)\omega'(\eta-R_1)
-\bigl(a_v(\eta)-a_0\bigr)
\omega(\eta-R_1)\bigl(1-\omega(\eta-R_1)\bigr)\notag\\[0.2cm]
&\le0.
\end{align*}
Indeed, $c_1>\theta$, $\omega'(\eta-R_1)>0$, $a_v(\eta)\ge a_0$, and $0<\omega(\eta-R_1)<1$ on $(R_1,R_\rho)$.
For $\eta\in(R_\rho,+\infty)$, by recalling \eqref{eq:define-gamma-star} and \eqref{eq:est1}, we deduce that
\begin{align*}
\mathscr L_v[\underline\Phi_\rho]
&=A_\rho e^{-\rho\eta}
\left[\rho^2+\theta\rho-a_v(\eta)(1-A_\rho e^{-\rho\eta})\right]\notag\\[0.2cm]
&\le A_\rho e^{-\rho\eta}
\left[\rho^2+\theta\rho-\frac{a_*}{4}\right]\notag\\[0.2cm]
&\le0,
\end{align*}
where the last inequality follows from $0<\rho<\gamma_*$ and the definition of $\gamma_*$.

It remains to check the one-sided derivative conditions at the non-differentiable points of $\underline\Phi_\rho$. At $\eta=R_1$, we have
\begin{equation*}
\underline\Phi_\rho'(R_1-)=0<\omega'(0)=\underline\Phi_\rho'(R_1+).
\end{equation*}
At $\eta=R_\rho$, by \eqref{eq:Arho-choice}, direct computation gives
\begin{equation*}
\underline\Phi_\rho'(R_\rho-)=\omega'(R_\rho-R_1)
\end{equation*}
and
\begin{equation*}
\underline\Phi_\rho'(R_\rho+)
=A_\rho\rho e^{-\rho R_\rho}
=\rho\bigl(1-\omega(R_\rho-R_1)\bigr),
\end{equation*}
it follows from \eqref{eq:corner-choice} that
\begin{equation*}
\underline\Phi_\rho'(R_\rho-)<\underline\Phi_\rho'(R_\rho+),
\end{equation*}
thus $\underline\Phi_\rho$ satisfies the required one-sided derivative condition at all its non-differentiable points.

We therefore conclude that $\underline\Phi_\rho$ is a generalized lower solution.

It is straightforward to check the pointwise ordering
\begin{equation*}
0\le \underline\Phi_\rho(\eta)\le\overline\Phi(\eta)\le1,
\qquad \eta\in\mathbb R.
\end{equation*}
Applying the comparison principle to \eqref{eq:aux-parabolic} with initial datum
\begin{equation*}
\mathcal V_v(\eta,0)=\overline\Phi(\eta),
\end{equation*}
we obtain
\begin{equation}\label{eq:aux-flow-order}
\underline\Phi_\rho(\eta) \le \mathcal V_v(\eta,t)
\le \overline\Phi(\eta), \qquad
\eta\in\mathbb R,\quad t>0.
\end{equation}
Since $\overline\Phi$ is a generalized upper solution, similar to the proof in \cite[Proposition 4.2]{2021 Lijing wangzhian},
\begin{equation*}
\mathcal V_v(\eta,t+s)\le \mathcal V_v(\eta,t),
\qquad
\eta\in\mathbb R,\quad t,s>0.
\end{equation*}
Consequently, the pointwise limit
\begin{equation}\label{eq:selected-limit}
\widetilde V_v(\eta):=\lim_{t\to+\infty}\mathcal V_v(\eta,t)
\end{equation}
exists for every $\eta\in\mathbb R$.

We next show that this limit is a stationary solution. Let $t_n\to+\infty$ and define
\begin{equation*}
\mathcal V_v^n(\eta,t):=\mathcal V_v(\eta,t_n+t),
\qquad \eta\in\mathbb R,\; t>0.
\end{equation*}
By Lemma~\ref{lem:schauder}, for every compact interval $I\subset\mathbb R$ and every $0<\tau<T<+\infty$, the family $\{\mathcal V_v^n\}$ is uniformly bounded in $C^{2+\beta,1+\beta/2}(I\times[\tau,T])$. Hence, by the Arzel\`a--Ascoli theorem, a subsequence converges locally in $C^{2,1}_{\mathrm{loc}}(\mathbb R\times(0,+\infty))$ to some function $\mathcal V_v^\infty$. For each fixed $t>0$, since $t_n+t\to+\infty$, the monotone convergence \eqref{eq:selected-limit} gives
\begin{equation*}
\mathcal V_v^\infty(\eta,t)=\lim_{n\to\infty}\mathcal V_v(\eta,t_n+t)=\widetilde V_v(\eta),
\qquad \eta\in\mathbb R.
\end{equation*}
Thus $\mathcal V_v^\infty$ is independent of $t$. Passing to the limit in \eqref{eq:aux-parabolic}, we obtain
\begin{equation*}
0=(\widetilde V_v)''-\theta(\widetilde V_v)'+a_v(\eta)\widetilde V_v(1-\widetilde V_v),
\end{equation*}
which is exactly \eqref{eq:stationary-profile}.

Finally, we establish the boundedness and the asymptotic behavior. The estimate \eqref{eq:aux-flow-order} implies
\begin{equation}\label{eq:selected-order-bounds}
\underline\Phi_\rho(\eta) \le
\widetilde V_v(\eta) \le
\overline\Phi(\eta), \qquad \eta\in\mathbb R.
\end{equation}
In particular,
\begin{equation*}
0\le \widetilde V_v(\eta)\le1,
\qquad \eta\in\mathbb R.
\end{equation*}
The strong maximum principle applied to \eqref{eq:stationary-profile} improves this to strict inequalities
\begin{equation}\label{eq:selected-strict-bounds}
0<\widetilde V_v(\eta)<1,
\qquad \eta\in\mathbb R.
\end{equation}
For $\eta\le0$, \eqref{eq:super}, \eqref{eq:selected-order-bounds} and \eqref{eq:selected-strict-bounds} give \eqref{eq:left-tail-eta}.
For $\eta\ge R_\rho$, \eqref{eq:lower-barrier-def}, \eqref{eq:selected-order-bounds} and \eqref{eq:selected-strict-bounds} give
\begin{equation*}
0<1-\widetilde V_v(\eta) \le A_\rho e^{-\rho\eta}.
\end{equation*}
By continuity, there exists a $C_R=C_R(\rho)>0$, independent of $v$, such that inequality \eqref{eq:right-tail-eta} holds for all $\eta\ge0$.

The above argument gives \eqref{eq:right-tail-eta} first for $\kappa<\rho<\gamma_*$. If $0<\rho\le\kappa$, we choose $\rho_1\in(\kappa,\gamma_*)$ and use the already proved estimate
\begin{equation*}
1-\widetilde V_v(\eta)\le C_Re^{-\rho_1\eta}\le C_Re^{-\rho\eta},
\qquad \eta\ge0.
\end{equation*}
Hence \eqref{eq:right-tail-eta} holds for every $0<\rho<\gamma_*$.

The decay estimates \eqref{eq:left-tail-eta} and \eqref{eq:right-tail-eta} further imply
\begin{equation*}
\widetilde V_v(-\infty)=0, \qquad \widetilde V_v(+\infty)=1.
\end{equation*}
This completes the proof of Lemma~\ref {lem:stationary-profile}.
\end{proof}

\section{Existence of Traveling Wave Solutions}\label{sec:existence}

From Lemma \ref{lem:stationary-profile}, we obtain the third map $V_v=T_3(U_v)$. Accordingly, similar to \cite{ChenQi2025}, for any $v\in\mathcal K_L$, we define the composite map $F = T_3\circ T_2\circ T_1:\mathcal K_L\to X$ via
\begin{equation*}
F(v)(z):=\widetilde V_v(\eta(z)),\qquad z\in\mathbb R.
\end{equation*}
Our next goal is to prove that $F$ admits a fixed point inside $\mathcal K_L$.

\begin{lemma}\label{lem:F-invariant}
Assume all hypotheses of Lemma~\ref{lem:stationary-profile} hold. Choose $L>0$ so large that
\begin{equation*}
L\ge \frac{D(0)}{\rho}\max\{\ln C_R,0\},
\end{equation*}
then
\begin{equation}\label{eq:F-invariant}
F(\mathcal K_L)\subset \mathcal K_L.
\end{equation}
\end{lemma}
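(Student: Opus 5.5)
The plan is to verify directly that $F(v)=\widetilde V_v\circ\eta$ lies between the barriers $\underline V_L$ and $\overline V_L$, and that it belongs to $X$. Throughout, $\rho$ is the fixed number from \eqref{eq:lambda-choice}, so $\kappa<\rho<\gamma_*$, and $C_R=C_R(\rho)$ is the constant of Lemma~\ref{lem:stationary-profile}. That constant does not depend on $v$, so the choice of $L$ in the statement is legitimate and uniform in $v\in\mathcal K_L$. The only inputs needed are the tail estimates \eqref{eq:left-tail-eta}--\eqref{eq:right-tail-eta} in the $\eta$-variable, transported to the $z$-variable by the growth bounds \eqref{eq:eta-growth}. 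Those growth bounds use only $D(U_v)\le D(0)$.

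\emph{Upper bound.} For $z\le L$ the inequality $F(v)(z)\le 1=\overline V_L(z)$ follows from $0<\widetilde V_v<1$. For $z>L>0$, \eqref{eq:eta-growth} gives $\eta(z)\le -z/D(0)<0$. Then \eqref{eq:left-tail-eta} together with the monotonicity of the exponential yields
\begin{equation*}
F(v)(z)\le e^{\lambda_1^-\eta(z)}\le e^{-\lambda_1^- z/D(0)}\le e^{-\lambda z}\le e^{-\lambda(z-L)} .
\end{equation*}
The third inequality uses $\lambda<\lambda_1^-/D(0)$ from \eqref{eq:lambda-choice}. The last uses $L>0$.

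\emph{Lower bound.} For $z\ge -L$ we have $\underline V_L(z)=0<F(v)(z)$. For $z<-L<0$, \eqref{eq:eta-growth} gives $\eta(z)\ge -z/D(0)>0$, so \eqref{eq:right-tail-eta} applies and gives
\begin{equation*}
1-F(v)(z)\le C_R e^{\rho z/D(0)}=C_Re^{-\rho L/D(0)}\,e^{\rho(z+L)/D(0)}\le e^{\rho(z+L)/D(0)}\le e^{\mu(z+L)} .
\end{equation*}
The second-to-last inequality is exactly where the hypothesis $L\ge \frac{D(0)}{\rho}\max\{\ln C_R,0\}$ enters: it gives $C_Re^{-\rho L/D(0)}\le1$. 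The last inequality uses $z+L<0$ and $\mu<\rho/D(0)$. Hence $F(v)(z)\ge 1-e^{\mu(z+L)}=\underline V_L(z)$.

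\emph{Membership in $X$.} $F(v)$ is continuous, being a composition of the $C^2$ function $\widetilde V_v$ with the $C^1$ function $\eta$, and it satisfies $0<F(v)<1$. The two-sided bounds just proved force $F(v)(-\infty)=1$ and $F(v)(+\infty)=0$. A continuous function on $\mathbb R$ with finite limits at $\pm\infty$ is uniformly continuous, so $F(v)\in X$.

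I would deliberately avoid bounding $F(v)'$ for uniform continuity, since $\eta'=-1/D(U_v)$ blows up as $U_v\to1$. The only delicate point I expect is bookkeeping: the sign conventions in \eqref{eq:eta-growth} must match the regions $z>L$ and $z<-L$, and $C_R$ must indeed be $v$-independent. Both are supplied by the earlier lemmas.
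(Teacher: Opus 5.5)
Your proposal is correct and follows the paper's proof essentially line by line. You use the same case split at $z=L$ and $z=-L$, transport \eqref{eq:left-tail-eta} and \eqref{eq:right-tail-eta} to the $z$-variable through \eqref{eq:eta-growth}, and invoke $\lambda<\lambda_1^-/D(0)$, $\mu<\rho/D(0)$ and $C_Re^{-\rho L/D(0)}\le 1$. Your factorization $C_Re^{\rho z/D(0)}=C_Re^{-\rho L/D(0)}e^{\rho(z+L)/D(0)}$ is a tidier form of the paper's ratio computation, and your explicit check that $F(v)\in X$ (via finite limits at $\pm\infty$) fills in a point the paper leaves implicit.
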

\begin{proof}
For any $v\in\mathcal K_L$, by Lemma~\ref{lem:stationary-profile}, we have
\begin{equation}\label{eq:F-strict-bounds}
0<F(v)(z)<1,\qquad z\in\mathbb R.
\end{equation}
We only need to verify
$\underline V_L(z)\le F(v)(z)\le \overline V_L(z)$.
The upper bound is proved first. On the one hand, if $z\ge L$, then $z\ge0$, by \eqref{eq:eta-growth} and \eqref{eq:left-tail-eta},
\begin{equation*}
F(v)(z)=\widetilde V_v(\eta(z))\le e^{\lambda_1^-\eta(z)}
\le e^{-\lambda_1^-z/D(0)}.
\end{equation*}
In view of \eqref{eq:lambda-choice}, for any $z\ge L$, we have
\begin{equation*}
e^{-\lambda_1^{-}z/D(0)}\le e^{-\lambda z}=e^{-\lambda(z-L)}e^{-\lambda L}\le e^{-\lambda(z-L)},
\end{equation*}
this yields
\begin{equation*}
F(v)(z)\le \overline V_L(z),\qquad z\ge L.
\end{equation*}
On the other hand, for $z<L$, we have $\overline V_L\equiv 1$, and hence \eqref{eq:F-strict-bounds} implies
\begin{equation*}
F(v)(z)\le \overline V_L(z),\qquad z<L.
\end{equation*}
Combining the two cases, we conclude
\begin{equation*}
F(v)(z)\le \overline V_L(z),\qquad z\in\mathbb R.
\end{equation*}

We next prove the lower bound. If $z<-L$, then $z\le0$, and \eqref{eq:eta-growth} together with \eqref{eq:right-tail-eta} gives
\begin{equation*}
1-F(v)(z)=1-\widetilde V_v(\eta(z))
\le C_Re^{-\rho\eta(z)}\le C_Re^{\rho z/D(0)}.
\end{equation*}
 
Recall \eqref{eq:lambda-choice}, we have $\mu<\rho/D(0)$. Hence, for $z<-L$,
\begin{equation*}
\frac{C_Re^{\rho z/D(0)}}{e^{\mu(z+L)}}
=C_Re^{\left(\frac{\rho}{D(0)}-\mu\right)z-\mu L}
\le C_Re^{-\left(\frac{\rho}{D(0)}-\mu\right)L-\mu L}
=C_Re^{-\rho L/D(0)}.
\end{equation*}
It follows that, if $L\ge \frac{D(0)}{\rho}\max\{\ln C_R,0\}$, then
\begin{equation*}
F(v)(z)\ge1-C_Re^{\rho z/D(0)}
\ge 1-e^{\mu(z+L)}
=\underline V_L(z),\qquad z<-L.
\end{equation*}

For $z\ge -L$, since $\underline V_L(z)=0$ and $F(v)(z)>0$ by \eqref{eq:F-strict-bounds}, we also have
\begin{equation*}
F(v)(z)\ge \underline V_L(z),\qquad z\ge -L.
\end{equation*}
This proves \eqref{eq:F-invariant}.
\end{proof}

We now show that the map $F$ admits a fixed point in $\mathcal K_L$.

\begin{lemma}\label{lem:F-compact-continuous}
Assume all hypotheses of Lemmas \ref{lem:Wv-representation}--\ref{lem:stationary-profile} and Lemma \ref{lem:F-invariant} hold.
The map $F:\mathcal K_L\to\mathcal K_L$ is continuous with respect to the norm $\|\cdot\|_*$, and $F(\mathcal K_L)$ is relatively compact with respect to $\|\cdot\|_*$. Moreover, $\mathcal K_L$ is nonempty, closed and convex with respect to this topology.
Consequently, there exists some $V\in\mathcal K_L$ satisfying
\begin{equation*}
F(V)=V.
\end{equation*}
\end{lemma}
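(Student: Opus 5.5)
The plan is to apply the Schauder fixed-point theorem in the locally convex topology induced by $\|\cdot\|_*$. Convexity, closedness and nonemptiness of $\mathcal K_L$ were already checked in Section~\ref{Preliminaries and Main Results}, and $F(\mathcal K_L)\subset\mathcal K_L$ is Lemma~\ref{lem:F-invariant}. So two things remain: (i) relative compactness of $F(\mathcal K_L)$, and (ii) continuity of $F$. Both reduce to controlling the chain $v\mapsto W_v\mapsto U_v\mapsto(\eta,a_v)\mapsto\widetilde V_v\mapsto F(v)=\widetilde V_v\circ\eta$ on compact sets, uniformly in $v$.

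\emph{Compactness.} By Arzel\`a--Ascoli and the uniform bound $0\le F(v)\le1$, it suffices to bound $F(v)'(z)=-\widetilde V_v'(\eta(z))/D(U_v(z))$ on each compact $I=[-k,k]$, uniformly in $v$.

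First, I would bound $U_v$ away from $1$ on $I$, uniformly in $v$. Since $v\ge\underline V_L$, the convolution formula gives $W_v\ge\delta_k>0$ on $[-k,+\infty)\cap\{z\le k'\}$. Inserting this into \eqref{eq:Uv-reciprocal-formula}, the integral exceeds $\theta(1+\epsilon_k)$, hence $U_v\le 1-\epsilon_k'$ on $I$. By $(H_1)$ this yields $D(U_v)\ge d_k>0$ on $I$. Consequently $|\eta(z)|\le k/d_k$ on $I$, so $\eta(I)$ lies in a fixed compact set $J_k$.

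Second, I would bound $\widetilde V_v'$ on $J_k$. Since $0<\widetilde V_v<1$ and $0\le a_v\le a_*$, interior elliptic estimates for the ODE \eqref{eq:stationary-profile} give a uniform bound on $|\widetilde V_v'|$; alternatively, integrate the ODE against $e^{-\theta\eta}$ and use boundedness. Combining the two bounds gives equicontinuity on $I$, and a diagonal argument gives precompactness in $\|\cdot\|_*$.

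\emph{Continuity: the coefficient chain.} Let $v_n\to v$ in $\|\cdot\|_*$. Then:
\begin{itemize}
\item[(a)] By dominated convergence with the kernel $G$, $W_{v_n}\to W_v$ locally uniformly, and in fact uniformly on $[z_0,\infty)$ thanks to the uniform tail \eqref{eq:W-tail}.
\item[(b)] From \eqref{eq:Uv-reciprocal-formula}, using uniform exponential decay of the integrand beyond a $v$-independent $R_1$, $U_{v_n}\to U_v$ locally uniformly.
\item[(c)] Using the lower bound $D(U_v)\ge d_k$ above, $\eta_n\to\eta$ and $Z_{v_n}\to Z_v$ locally uniformly.
\item[(d)] Hence $a_{v_n}\to a_v$ locally uniformly.
\end{itemize}
By the compactness step together with the $C^2$ bounds of Lemma~\ref{lem:schauder}, every subsequence of $\widetilde V_{v_n}$ has a further subsequence converging in $C^2_{\rm loc}$ to some $V^\sharp$. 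This $V^\sharp$ solves \eqref{eq:stationary-profile} with coefficient $a_v$, and it lies between $\underline\Phi_\rho$ and $\overline\Phi$ because these barriers do not depend on $v$ once $R_1,R_\rho$ are chosen uniformly, which is possible by the uniform convergence $a_v\to a_*$ coming from (b). It then suffices to show $V^\sharp=\widetilde V_v$, since $F(v_n)=\widetilde V_{v_n}\circ\eta_n\to\widetilde V_v\circ\eta$ locally uniformly follows.

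\emph{Continuity: identifying the limit (main obstacle).} The inequality $V^\sharp\le\widetilde V_v$ is the easy direction. The limit $\widetilde V_v$ is the maximal stationary solution below $\overline\Phi$: any stationary $S\le\overline\Phi$ satisfies $S\le\mathcal V_v(\cdot,t)$ for all $t$ by comparison, hence $S\le\widetilde V_v$. Applying this to $S=V^\sharp$ gives $V^\sharp\le\widetilde V_v$.

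The reverse inequality requires uniqueness of stationary solutions of \eqref{eq:stationary-profile} in the class $\underline\Phi_\rho\le S\le\overline\Phi$. I would prove it by a KPP scaling argument. Let $S\le\widetilde V_v$ be two such solutions, and set $k^*:=\inf\{k\ge1:\,kS\ge\widetilde V_v\}$.
\begin{itemize}
\item[(1)] Finiteness of $k^*$ needs the ratio $\widetilde V_v/S$ to be bounded at $\pm\infty$. At $+\infty$ both tend to $1$. At $-\infty$, where $a_v\to0$, both behave like $Ce^{\theta\eta}(1+o(1))$, by a Wronskian or variation-of-constants comparison with $\phi''-\theta\phi'=0$. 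I expect this tail comparison to be the delicate point.
\item[(2)] If $k^*>1$, then $k^*S$ is a strict supersolution because $f(s)/s=a_v(1-s)$ is decreasing.
\item[(3)] Touching at an interior point then contradicts the strong maximum principle, and touching at $\pm\infty$ is excluded by the asymptotics in (1).
\end{itemize}
Hence $k^*=1$, so $S=\widetilde V_v$ and $V^\sharp=\widetilde V_v$. Finally, the Schauder fixed-point theorem, in the Tychonoff form for convex closed sets in the Fr\'echet-type space with $\|\cdot\|_*$, yields $V$ with $F(V)=V$.
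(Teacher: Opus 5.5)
\textbf{Comparison with the paper.} Your compactness argument and the coefficient chain (a)--(d) coincide with the paper's. For identifying $\lim_n\widetilde V_{v_n}$ you go further than the paper. The paper only appeals to continuous dependence of \eqref{eq:aux-parabolic} on $a_v$ together with the monotone-in-time construction. By itself that yields only $\limsup_n\widetilde V_{v_n}\le\widetilde V_v$, which is your ``easy direction''. So you are right that a uniqueness statement for \eqref{eq:stationary-profile} in the barrier class is what is actually needed.

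Since $0$ is also a stationary solution below $\overline\Phi$, such uniqueness can only hold among solutions with $S(+\infty)=1$, so the $n$-uniform lower barrier is essential. Its uniformity does not come from the local convergence (b). It comes from $W_v\ge\int_{\mathbb R}G(y)\underline V_L(\cdot-y)\,dy$, which tends to $1$ at $-\infty$ and makes $R_0,\delta_0$ in Lemma~\ref{lem:U-basic} independent of $v$. This gives $U_v(z)\to0$ as $z\to-\infty$ and $D(U_v)\ge d>0$ on $(-\infty,0]$ uniformly in $v$, hence $a_v(\eta)\to a_*$ as $\eta\to+\infty$ uniformly in $v$.

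\textbf{The gap: steps (1) and (3) at $\eta\to-\infty$.} The claimed asymptotics $S\sim Ce^{\theta\eta}(1+o(1))$ are false. Because $d\eta=-dz/D(U_v(z))$, one has $\int_{-\infty}^{\eta_0}a_v\,d\eta=r\int_{Z_v(\eta_0)}^{+\infty}dz=+\infty$. So $a_v$ is not integrable at $-\infty$, and no variation-of-constants comparison with $\phi''-\theta\phi'=0$ applies. Indeed, for $T:=e^{-\theta\eta}S$ the identity $(e^{\theta\eta}T')'=-a_v(1-S)e^{\theta\eta}T$ shows that $T$ is nonincreasing with $T(-\infty)=+\infty$.

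Step (3) has a further hole. Even granting that $\widetilde V_v/S$ has a finite limit $\ell$ at $-\infty$, nothing excludes $k^*=\ell$ with no interior contact point. In that case the strong maximum principle gives no contradiction, and there is no translation invariance available to normalize the constants.

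\textbf{A fix.} The missing idea is the Wronskian $\mathcal W:=S\widetilde V_v'-S'\widetilde V_v$. A direct computation gives $(e^{-\theta\eta}\mathcal W)'=a_v e^{-\theta\eta}S\widetilde V_v(\widetilde V_v-S)\ge0$, using your $S\le\widetilde V_v$.
\begin{itemize}
\item At $+\infty$, $e^{-\theta\eta}\mathcal W\to0$ because $\mathcal W$ is bounded.
\item At $-\infty$, $e^{-\theta\eta}\mathcal W\to0$ once you know $S,\widetilde V_v,|S'|,|\widetilde V_v'|\le Ce^{(\theta-\varepsilon)\eta}$ with $\varepsilon=\lambda_-<\theta/2$. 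Here $\lambda_\pm$ are the roots of $\lambda^2-\theta\lambda+\varepsilon_0=0$. To get these bounds, compare with $Ae^{\lambda_+\eta}+\delta e^{\lambda_-\eta}$ on a left half-line where $a_v<\varepsilon_0<a_*$, using $S\le e^{\lambda_1^-\eta}$ and $\lambda_-<\lambda_1^-$, then let $\delta\to0$. The derivative bounds follow by integrating $(e^{-\theta\eta}S')'=-e^{-\theta\eta}a_vS(1-S)$.
\end{itemize}
Integrating the identity over $\mathbb R$ then gives $\int_{\mathbb R}a_ve^{-\theta\eta}S\widetilde V_v(\widetilde V_v-S)\,d\eta=0$. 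Since the integrand is nonnegative, $S\equiv\widetilde V_v$. This replaces the scaling argument entirely.
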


\begin{proof}
We first prove compactness. Take any sequence $\{v_n\}\subset\mathcal K_L$. The elements of this sequence are uniformly bounded due to the definition of $\mathcal K_L$. Associated with each $v_n$, Lemmas 
\ref{lem:Wv-representation}--
\ref{lem:stationary-profile} yield the corresponding functions $W_n$, $U_n$, $\eta_n$ and $\widetilde V_n$. By Lemmas \ref{lem:Wv-representation} and \ref{lem:U-basic}, for any compact interval $[-k,k]\subset\mathbb R$, there exists $C_k>0$, independent of $n$, such that
\begin{equation*}
\|W_n\|_{C^2([-k,k])}+\|U_n\|_{C^1([-k,k])}\le C_k.
\end{equation*}

Moreover, the lower barrier defining $\mathcal K_L$, together with the positivity of the Green kernel $G$, implies that $W_n$ has a positive lower bound on every compact interval, uniformly with respect to $n$. Combining this fact with the representation formula \eqref{eq:Uv-formula}, we obtain that, for every $k\in\mathbb N$, there exists $\delta_k\in(0,1)$, independent of $n$, such that
\begin{equation*}
0<U_n(z)\le 1-\delta_k,\qquad z\in[-k,k],\quad n\in\mathbb N.
\end{equation*}
Consequently, by hypothesis $(H_1)$, there exists $d_k>0$, independent of $n$, such that
\begin{equation*}
D(U_n(z))\ge d_k,\qquad z\in[-k,k],\quad n\in\mathbb N.
\end{equation*}
Hence, from the definition of $\eta_n$,
\begin{equation*}
\eta_n(z):=-\int_0^z\frac{ds}{D(U_n(s))},
\end{equation*}
the family $\{\eta_n\}$ is uniformly bounded and equicontinuous on $[-k,k]$. Thus there exists $M_k>0$, independent of $n$, such that
\begin{equation*}
\eta_n([-k,k])\subset[-M_k,M_k],\qquad n\in\mathbb N.
\end{equation*}

In addition, for $\eta\in\mathbb{R}$, Lemma 
\ref{lem:stationary-profile} gives the bound
\begin{equation*}
0<\widetilde V_n(\eta)<1.
\end{equation*}
Combining this bound with the interior Schauder estimates \cite[Chapter 6, Theorem 6.2 and Corollary 6.3]{Trudinger}, we obtain
\begin{equation*}
\|\widetilde V_n\|_{C^{2+\beta}([-M_k,M_k])}\le C_k,
\end{equation*}
for some $\beta\in(0,1)$, every $k\in\mathbb N$, and a constant $C_k>0$ independent of $n$. Using this estimate together with Lemma 
\ref{lem:stationary-profile}, we deduce that $F(v_n)$ is uniformly bounded in $\mathbb{R}$ and equicontinuous on every compact interval. By the Arzel\`a--Ascoli theorem and a diagonal argument, $\{F(v_n)\}$ has a locally uniformly convergent subsequence. Equivalently, this subsequence converges with respect to the norm $\|\cdot\|_*$. Therefore $F(\mathcal K_L)$ is relatively compact with respect to $\|\cdot\|_*$.

We continue to prove the continuity. Suppose $v_n\to v$ with respect to $\|\cdot\|_*$, or equivalently, $v_n\to v$ locally uniformly on $\mathbb R$, as $n\to+\infty$. Similarly, denote by $W_v$, $U_v$, $\eta_v$ and $\widetilde V_v$ the corresponding functions associated with $v$. Again from Lemmas \ref{lem:Wv-representation} and \ref{lem:U-basic}, we conclude that, as $n\to+\infty$,
\begin{equation*}
W_n\to W_v \quad\text{in }C^2_{\mathrm{loc}}(\mathbb R),\qquad U_n\to U_v \quad\text{in }C^1_{\mathrm{loc}}(\mathbb R).
\end{equation*}

It follows from the definition of $\eta_n$ and the local positivity of $D(U_v)$ that
\begin{equation*}
\eta_n\to\eta_v \quad\text{locally uniformly on }\mathbb R.
\end{equation*}
In particular, the inverse maps $Z_n$ converge locally uniformly to $Z_v$. Hence
\begin{equation*}
a_n(\eta):=rD(U_n(Z_n(\eta)))\to a_v(\eta):=rD(U_v(Z_v(\eta)))
\quad\text{locally uniformly on }\mathbb R.
\end{equation*}
By the continuous dependence of the auxiliary parabolic problem \eqref{eq:aux-parabolic} on the coefficient $a_v$, together with the monotone limiting construction in Lemma \ref{lem:stationary-profile}, we obtain
\begin{equation*}
\widetilde V_n\to \widetilde V_v \quad\text{in }C^1_{\mathrm{loc}}(\mathbb R).
\end{equation*}

Combining the locally uniform convergence of $\eta_n$ and the local $C^1$-convergence of $\widetilde V_n$, we deduce that
\begin{equation*}
F(v_n)(z)=\widetilde V_n(\eta_n(z))\to \widetilde V_v(\eta_v(z))=F(v)(z)
\end{equation*}
locally uniformly on $\mathbb R$. By the equivalence between locally uniform convergence and convergence with respect to $\|\cdot\|_*$ on $\mathcal K_L$, this is equivalent to
\begin{equation*}
\|F(v_n)-F(v)\|_*\to0.
\end{equation*}
Hence $F$ is continuous with respect to $\|\cdot\|_*$.

It is straightforward to verify that $\mathcal K_L$ is nonempty, closed and convex with respect to $\|\cdot\|_*$. Together with the invariance property $F(\mathcal K_L)\subset\mathcal K_L$ from Lemma \ref{lem:F-invariant}, we apply the  Schauder--Tychonoff fixed-point theorem \cite[Theorem 1.13 p. 148]{Dugundji-Granas-1982} in the $\|\cdot\|_*$-topology to obtain a fixed point $V\in\mathcal K_L$ such that
\begin{equation*}
F(V)=V.
\end{equation*}
 This completes the proof.
\end{proof}

\begin{proof}[Proof of Theorems~\ref{thm:main} and~\ref{thm:pro}]
By Lemma~\ref{lem:F-compact-continuous}, there exists $V\in\mathcal K_L$ such that
\begin{equation*}
F(V)=V.
\end{equation*}
Define
\begin{equation*}
W:=T_1(V),\qquad U:=T_2(W).
\end{equation*}
The construction of the three auxiliary maps implies that the triple $(U,V,W)$ satisfies \eqref{eq:tw-system}. Moreover, Lemmas~\ref{lem:Wv-representation}, \ref{lem:U-basic} and \ref{lem:stationary-profile} give
\begin{equation*}
0<U(z)<1,\qquad 0<V(z)<1,\qquad 0<W(z)<1,
\qquad z\in\mathbb R.
\end{equation*}

We next prove that $V$ is strictly decreasing. Rewriting the second equation of \eqref{eq:tw-system} in divergence form, we have
\begin{equation*}
(AV')'=-\frac{rA}{D(U)}V(1-V)<0,
\end{equation*}
where
\begin{equation*}
A(z):=D(U(z))\exp\left\{\int_{z_0}^{z}\frac{\theta}{D(U(s))}\,ds\right\}>0
\end{equation*}
for some fixed $z_0\in\mathbb R$. Hence $AV'$ is strictly decreasing. We claim that $AV'<0$ on $\mathbb R$. If there were $z_1\in\mathbb R$ such that
\begin{equation*}
A(z_1)V'(z_1)\ge0,
\end{equation*}
then, for every $z<z_1$,
\begin{equation*}
A(z)V'(z)>A(z_1)V'(z_1)\ge0.
\end{equation*}
Since $A(z)>0$, this gives $V'(z)>0$ for $z<z_1$, and therefore
\begin{equation*}
V(z_1)\ge \lim_{z\to-\infty}V(z)=1,
\end{equation*}
which contradicts $0<V(z_1)<1$. Thus $A(z)V'(z)<0$ for all $z\in\mathbb R$, and consequently
\begin{equation*}
V'(z)<0,\qquad z\in\mathbb R.
\end{equation*}

The monotonicity of $W$ follows from the Green representation. Since
\begin{equation*}
W(z)=\int_{\mathbb R}G(y)V(z-y)\,dy,
\end{equation*}
with $G>0$, differentiating under the integral sign gives
\begin{equation*}
W'(z)=\int_{\mathbb R}G(y)V'(z-y)\,dy<0,
\qquad z\in\mathbb R.
\end{equation*}

It remains to prove the monotonicity of $U$. Let
\begin{equation*}
\varphi(z):=1-d_1W(z).
\end{equation*}
Since $W'<0$, we have $\varphi'>0$. The first equation of \eqref{eq:tw-system} can be written as
\begin{equation}\label{eq:U-varphi-equation}
\theta U'=U(U-\varphi).
\end{equation}
We first show that
\begin{equation}\label{eq:U-greater-varphi}
U(z)\ge\varphi(z),\qquad z\in\mathbb R.
\end{equation}
Indeed,
\begin{equation*}
U(-\infty)=0,\qquad \varphi(-\infty)=1-d_1<0,
\end{equation*}
while
\begin{equation*}
U(+\infty)=1,\qquad \varphi(+\infty)=1.
\end{equation*}
If \eqref{eq:U-greater-varphi} failed, then there would exist a nonempty interval on which $U<\varphi$. On such an interval, \eqref{eq:U-varphi-equation} gives $U'<0$, whereas $\varphi'>0$. Hence $U-\varphi$ is strictly decreasing there, and once it becomes negative it cannot return to zero in the forward direction. This contradicts
\begin{equation*}
\lim_{z\to+\infty}(U(z)-\varphi(z))=0.
\end{equation*}
Therefore \eqref{eq:U-greater-varphi} holds, which gives $1-U<d_1W$.
It follows from \eqref{eq:U-varphi-equation} and \eqref{eq:U-greater-varphi} that
\begin{equation*}
U'(z)>0,\qquad z\in\mathbb R.
\end{equation*}
Finally, since $V\in\mathcal K_L$, the definition of $\mathcal K_L$ and \eqref{eq:barrier-limits-L} imply
\begin{equation*}
V(-\infty)=1,\qquad V(+\infty)=0.
\end{equation*}
By Lemmas~\ref{lem:Wv-representation} and~\ref{lem:U-basic}, we have
\begin{equation*}
(U,V,W)(-\infty)=(0,1,1),\qquad
(U,V,W)(+\infty)=(1,0,0).
\end{equation*}
This proves Theorems~\ref{thm:main} and~\ref{thm:pro}.
\end{proof}

\begin{proposition}[Asymptotic behavior]\label{prop:asymptotic-estimates}
Let $(U,V,W)$ be the traveling wave constructed in Theorem~\ref{thm:main}, and let $\eta$ be the degenerate variable defined in \eqref{defeta}. Then there exists a constant $C_1>0$ such that
\begin{equation}\label{eq:main-V-right-new}
0<V(z)\le C_1e^{\lambda_1^-\eta(z)}, \qquad z\ge0,
\end{equation}
and, for every $0<\rho<\gamma_*$, there exists a constant $C_2=C_2(\rho)>0$ such that
\begin{equation}\label{eq:main-V-left-new}
0<1-V(z)\le C_2e^{-\rho\eta(z)}, \qquad z\le0.
\end{equation}
Consequently, for every $0<\sigma_+<\min\{\lambda_1^-/D(0),-\lambda_2^-\}$, there exists $C_+>0$ such that
\begin{equation}\label{eq:main-right-tail-new}
0<V(z)\le C_+e^{-\sigma_+z},\qquad
0<W(z)\le C_+e^{-\sigma_+z},\qquad
0<1-U(z)\le C_+e^{-\sigma_+z},\qquad z\ge0.
\end{equation}
Moreover, for every $0<\rho<\gamma_*$ and every $0<\sigma_-<\min\{\rho/D(0),\lambda_2^+\}$, there exists $C_->0$ such that
\begin{equation}\label{eq:main-left-tail-new}
0<1-V(z)\le C_-e^{\sigma_-z},\qquad
0<1-W(z)\le C_-e^{\sigma_-z},\qquad z\le0.
\end{equation}
In addition, there exist $C_U>0$ and $\sigma_U>0$ such that
\begin{equation}\label{eq:main-U-left-new}
0<U(z)\le C_Ue^{\sigma_Uz},\qquad z\le0.
\end{equation}
Finally, $U'(z)\to0$ and $W'(z)\to0$ as $z\to\pm\infty$. If $V'$ is uniformly continuous on $\mathbb R$, then $V'(z)\to0$ as $z\to\pm\infty$.
\end{proposition}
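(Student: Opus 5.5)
The plan is to read off all tail estimates from the fixed-point identity $V=F(V)$, i.e. $V(z)=\widetilde V(\eta(z))$, where $\widetilde V$ is the stationary profile of Lemma~\ref{lem:stationary-profile} for the coefficient $a_V$. Then transfer them from $\eta$ back to $z$ via \eqref{eq:eta-growth}, and afterwards to $W$ and $U$ through the Green representation and the explicit formula \eqref{eq:Uv-formula}.

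\textbf{Estimates in $\eta$, then in $z$ for $V$.} For $z\ge0$ we have $\eta(z)\le0$, so \eqref{eq:left-tail-eta} gives \eqref{eq:main-V-right-new} with $C_1=1$. For $z\le0$ we have $\eta(z)\ge0$, so \eqref{eq:right-tail-eta} gives \eqref{eq:main-V-left-new} with $C_2=C_R(\rho)$. Positivity of $V$ and of $1-V$ comes from Theorem~\ref{thm:pro}. Combining with \eqref{eq:eta-growth}, we get $V(z)\le e^{-\lambda_1^-z/D(0)}$ for $z\ge0$ and $1-V(z)\le C_Re^{\rho z/D(0)}$ for $z\le0$. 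These are the $V$-parts of \eqref{eq:main-right-tail-new} and \eqref{eq:main-left-tail-new}, since $\sigma_+<\lambda_1^-/D(0)$ and $\sigma_-<\rho/D(0)$.

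\textbf{Transfer to $W$ and $U$.} I would rerun the computation \eqref{eq:I11-est}--\eqref{eq:I2-est} with $I_1,I_2$ and with $\lambda$ replaced by $\sigma_+<-\lambda_2^-$, using $V\le Ce^{-\sigma_+ s}$. This yields $W\le C e^{-\sigma_+z}$ for $z\ge0$. For the left tail, I would write $1-W(z)=\int G(y)(1-V(z-y))\,dy$ using \eqref{eq:G-mass-one}. I split the integral at $s=z-y=0$, use $1-V(s)\le Ce^{\sigma_-s}$ for $s\le0$ and $1-V\le1$ for $s>0$, and integrate against $e^{\lambda_2^-(z-s)}$ and $e^{\lambda_2^+(z-s)}$. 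The condition $\sigma_-<\lambda_2^+$ makes the $I_2$ piece, which carries $e^{\lambda_2^+ z}$, dominated by $e^{\sigma_- z}$, and the $I_1$ piece is dominated likewise.

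For $1-U$ on $z\ge0$, I use the reciprocal formula \eqref{eq:Uv-reciprocal-formula}: $1/U(z)-1=\frac1\theta\int_z^\infty e^{-(s-z)/\theta}\bigl(e^{\frac{d_1}{\theta}\int_z^sW}-1\bigr)\,ds$. Since $\int_z^sW\le Ce^{-\sigma_+z}/\sigma_+$, which is uniformly small, I can bound $e^x-1\le Cx$ and obtain $1/U-1\le Ce^{-\sigma_+z}$. Hence $1-U\le Ce^{-\sigma_+z}$.

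For \eqref{eq:main-U-left-new}, note $1-d_1W(z)\to1-d_1<0$ as $z\to-\infty$. So there are $R_0\le0$ and $\delta_0>0$ with $1-d_1W\le-\delta_0$ on $(-\infty,R_0]$. The ODE $\theta U'=U(U+d_1W-1)\ge\delta_0U$ there then gives $U(z)\le U(R_0)e^{\delta_0(z-R_0)/\theta}$. I extend this to $z\le0$ by enlarging the constant, with $\sigma_U=\delta_0/\theta$.

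\textbf{Derivatives.} From the equations, $U'=U(U+d_1W-1)/\theta\to0$ at both ends, because $U(1-U-d_1W)\to0$ there. For $W'=\lambda_2^-I_1+\lambda_2^+I_2$, dominated convergence gives $I_1\to -c/(\lambda_2^-(\lambda_2^+-\lambda_2^-))$ and $I_2\to c/(\lambda_2^+(\lambda_2^+-\lambda_2^-))$ at $-\infty$, so $W'\to0$ there; at $+\infty$ both integrals tend to $0$. For $V'$, I would use that $V$ has limits at $\pm\infty$ and $V'$ is uniformly continuous, and apply Barbălat's lemma.

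\textbf{Main obstacle.} The main obstacle is the bookkeeping of exponents in the convolution estimates and the $1-U$ estimate. In particular, the smallness of $\int_z^\infty W$ has to be made uniform for $z\ge0$, not only for large $z$. I would handle this by proving the bounds for $z\ge R$ with $R$ large and absorbing the compact range $[0,R]$ into the constant.
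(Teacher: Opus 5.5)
Your proposal is correct and follows the paper's proof essentially step by step. Like the paper, you read the $\eta$-tails off Lemma~\ref{lem:stationary-profile}, transfer them to $z$ via \eqref{eq:eta-growth}, handle $W$ and $1-W$ with the Green kernel, use the logarithmic ODE bound for $U$ at $-\infty$, and use the uniform-continuity argument for $V'$. The only genuine deviation is the bound on $1-U$. You go through the reciprocal formula \eqref{eq:Uv-reciprocal-formula}, whereas the paper simply reuses the inequality $1-U<d_1W$ from the monotonicity proof of Theorem~\ref{thm:pro}. Your route works directly on all of $z\ge0$, since $e^x-1\le xe^x$ with $x$ uniformly bounded there, so the ``main obstacle'' you flag is harmless.
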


\begin{proof}
Since the fixed point satisfies $V(z)=\widetilde V(\eta(z))$, the estimates \eqref{eq:main-V-right-new} and \eqref{eq:main-V-left-new} follow directly from \eqref{eq:left-tail-eta} and \eqref{eq:right-tail-eta}. We next translate these estimates back to the original variable $z$.

For $z\ge0$, \eqref{eq:eta-growth} gives $\eta(z)\le -z/D(0)$. Hence \eqref{eq:main-V-right-new} implies $V(z)\le Ce^{-\sigma_+z}$ for every $0<\sigma_+<\lambda_1^-/D(0)$. Combining this with the Green representation
\begin{equation*}
W(z)=\int_{\mathbb R}G(y)V(z-y)\,dy
\end{equation*}
and the exponential decay of $G$, we obtain $W(z)\le Ce^{-\sigma_+z}$, provided $\sigma_+<-\lambda_2^-$. Since the monotonicity proof gives $1-U<d_1W$, the right-hand estimates in \eqref{eq:main-right-tail-new} follow.

For $z\le0$, \eqref{eq:eta-growth} gives $\eta(z)\ge -z/D(0)$. Thus \eqref{eq:main-V-left-new} yields $1-V(z)\le Ce^{\sigma_-z}$ for every $0<\sigma_-<\rho/D(0)$. Using $\int_{\mathbb R}G(y)\,dy=1$, we write
\begin{equation*}
1-W(z)=\int_{\mathbb R}G(y)(1-V(z-y))\,dy.
\end{equation*}
The exponential decay of $G$ then gives the same left-hand estimate for $1-W$, provided $\sigma_-<\lambda_2^+$. This proves \eqref{eq:main-left-tail-new}.

It remains to estimate $U$ at the invaded end. Since $W(-\infty)=1$ and $d_1>1$, we can choose $z_0<0$ and $\delta_U>0$ such that $d_1W(z)-1\ge\delta_U$ for $z\le z_0$. From the first traveling-wave equation, $\theta U'=U(U+d_1W-1)$, and hence $(\ln U)'\ge\delta_U/\theta$ for $z\le z_0$. Integrating over $(z,z_0)$ gives \eqref{eq:main-U-left-new} after enlarging the constant.

The derivative limits follow from the same estimates. Indeed, the identity $\theta U'=U(U+d_1W-1)$ and the limits of $U$ and $W$ imply $U'(z)\to0$ as $z\to\pm\infty$. The formula $W'(z)=\lambda_2^-I_1(z)+\lambda_2^+I_2(z)$, obtained from the Green representation, together with the right-tail estimate for $V$ and the left-tail estimate for $1-V$, gives $W'(z)\to0$ as $z\to\pm\infty$. Finally, if $V'$ is uniformly continuous, then $V'<0$ and $\int_{\mathbb R}|V'(z)|\,dz=V(-\infty)-V(+\infty)=1$; hence $V'(z)\to0$ at both infinities.
\end{proof}

\section*{Acknowledgments}
Xiong Li was supported by the National Natural Science Foundation of China (No.12371158). Yang Wang was supported by the Shanxi Scholarship Council of China (No. 2025-070). Xinyue Cao was supported by the China Scholarship Council (No. 202506040020). The authors would like to thank the anonymous referees for their valuable comments and suggestions.

\bibliographystyle{elsarticle-num}

\end{document}